\title{Centers of Hecke Algebras of Complex Reflection Groups}
\author{Eirini Chavli}
\address{E.C.: Institute of Algebra and Number Theory, University of Stuttgart, Stuttgart, Germany}
\email{eirini.chavli@mathematik.uni-stuttgart.de}
\author{G\"otz Pfeiffer}
\address{G.P.: School of Mathematical and Statistical Sciences,
  National University of Ireland,
  Galway, University Road, Galway, Ireland}
\email{goetz.pfeiffer@nuigalway.ie}
\subjclass[2020]{Primary 20C08; Secondary 20F55}
\tikzset{every node/.style={circle,draw=none,minimum size=4mm,inner sep=0pt}}
\tikzset{r/.style={red,->,>=latex}}%
\tikzset{b/.style={blue,->,>=latex}}%
\numberwithin{equation}{section}
\newtheorem{Theorem}{Theorem}[section]
\newtheorem{Proposition}[Theorem]{Proposition}
\newtheorem{conj}[Theorem]{Conjecture}
\newtheorem{rem}[Theorem]{Remark}
\newtheorem{ex}[Theorem]{Example}
\DeclareMathOperator{\Hom}{Hom}
\DeclareMathOperator{\Irr}{Irr}
\DeclareMathOperator{\Cl}{Cl}
\begin{document}
	%!TeX spellcheck = en_us
\begin{abstract}
  We provide a dual version of the Geck--Rouquier
  Theorem~\cite{GeckRouquier97} on the center of an Iwahori--Hecke
  algebra, which also covers the complex case.  For the eight complex
  reflection groups of rank $2$, for which the symmetrising trace
  conjecture is known to be true, we provide a new faithful matrix model for their
  Hecke algebra $H$.
These models enable concrete calculations inside $H$.
  For each of the eight groups, we compute an explicit integral basis
  of the center of $H$.
\end{abstract}

\maketitle

\section{Introduction}
Let $W$ be a finite complex reflection group and $H$ the associated generic Hecke algebra, defined over the Laurent polynomial ring $R=\mathbb{Z}[u_1^{\pm},\dots, u_k^{\pm}]$, where $\{u_i\}_{1\leq i\leq k}$ is a set of parameters whose cardinality depends on $W$. In 1999,  Malle  \cite[\S 5]{malle1} proved that $H$ is split semisimple when defined over the field $F=\mathbb{C}(v_1,\dots, v_k)$, where each parameter $v_i$ is a root of $u_i$ of rank $N_W$, for some specific $N_W\in \mathbb{N}$. By Tits' deformation theorem \cite[Theorem 7.4.6]{GP}, the specialization $v_{j} \mapsto 1$ induces a bijection
$\Irr(H\otimes_RF)\rightarrow \Irr(W)$.

A natural question is how the irreducible representations behave after specializing the parameters $u_i$ to arbitrary complex numbers. If the specialized Hecke algebra is semisimple, Tits' deformation theorem still applies; the simple representations of the specialized Hecke algebra are parametrized again by $\Irr(W)$. However, if the specialized algebra is not semisimple one needs to find another way to parametrise the irreducible representations. One main obstacle in this direction is the lack of the description of the center $Z(H)$ of the Hecke algebra $H$ (see  \cite[Lemma 7.5.10]{GP}).

Apart from the real case \cite{GeckRouquier97}, there is not yet a known precise description of the center of the generic Hecke algebra $H$ in the complex case, except for the groups $G_4$ and $G(4, 1, 2)$ provided by Francis \cite{fr}.
In this paper, we introduce a new general method for computing an $R$-basis of the center of $H$.
Let $\mathcal{B}$ be an $R$-basis of $H$. Expressing an arbitrary element $z \in Z(H)$ as a linear combination of $\mathcal{B}$, the conditions $sz = zs$, one for each generator $s$ of $H$, give an $R$-linear system, whose solution describes a basis of $Z(H)$ as linear combinations of the elements of $\mathcal{B}$. This elementary approach has the following three difficulties:
\begin{itemize}
	\item Calculations inside the Hecke algebra are very complicated, even for products of the form $sb$ and $bs$, for $b \in \mathcal{B}$. In the last four years, there is a progress on this direction (see, for example, \cite{BCCK, BCC, CC}). However, it is still unclear if one can completely automate such calculations, since all the attempts so far use  a lot of (long) computations made by hand.
	\item Solving the aforementioned $R$-linear system is not always easy.
	%, especially when the groups become larger.
	It is known that Gaussian elimination can suffer from coefficient explosion
	over
	a field of rational functions in several variables, such as the quotient
	field of the ring $R$ of Laurent polynomials.
	As a result,
	the choices of dependent and independent variables can be crucial, since wrong choices can lead to dead ends.
	\item Even if the $R$-linear system can be solved (over the fraction
	field), the solution cannot be expected to lie in $R$. In fact, for the case of $G_{12}$ a first attempt solving this system provided us with a solution not in $R$ (details can be found in the project's webpage \cite{pr}).
\end{itemize}

However, we show that it is possible to overcome all these difficulties, as
demonstrated by our results for particular examples.
A natural first example is the smallest exceptional group, the group $G_4$.
For this group we use a new approach on constructing an
$R$-basis $\mathcal{B}$ (see Example \ref{exa}), which allows us
to work with a faithful matrix representation of $H$, rather than its usual presentation.  Hence, we can
\begin{itemize}
	\item  automate
	calculations inside the Hecke algebra of $G_4$ and, in particular,
	compute the elements $sb$ and $bs$ from above,
	\item solve the $R$-linear system $sz = zs$,
	and 
	% now 
	% The fact that
	% the group $G_4$ is of order 24 (and, hence, admits an $R$-basis of
	% 24 elements) and that the Laurent polynomial ring $R$ depends only
	% on three parameters, allowed us to solve the $R$-linear system that
	% gives us an $R$-basis of the center of the associated Hecke algebra,
	% where each element is expressed as $R$-linear combination of the
	% elements of the basis of the Hecke algebra.
	\item verify that all solutions we obtain for $z$ are in fact
	$R$-linear combinations of $\mathcal{B}$.
\end{itemize}

Our next goal is to explain the coefficients of these $R$-linear
combinations.  This allows us to describe the center $Z(H)$ for other
complex reflection groups without relying on the solution of the
$R$-linear system.  In order to explain our findings, we first need to
revisit the real case.

Let $W$ be a real reflection group and $H$ its associated Iwahori--Hecke algebra, which admits a standard basis $\{T_w\,:w\in W\}$.
Denote by $\Cl(W)$ the set of conjugacy classes of $W$, and choose a
set of representatives $\{w_C \in C \mid C \in \Cl(W)\}$ such that each
element $w_C$ has minimal length in its class $C$.  It can
be shown that there exist uniquely determined polynomials
$f_{w,C} \in R$, independent of the choice of the
minimal length representatives $w_C$, the so-called \emph{class
	polynomials} (see \cite[\S 8.2]{GP}), such that
\begin{align*}
	\chi(T_w) = \sum_{C \in \Cl(W)} f_{w,C}\, \chi(T_{w_C})
\end{align*}
for all $\chi \in \Irr(W)$.  In other words, the column
$\bigl(\chi(T_w)\bigr)_{\chi}$ of character values of $T_w$ is an $R$-linear
combination of the columns $\bigl(\chi(T_{w_C})\bigr)_{\chi}$ of the basis
elements of $H$ corresponding to the conjugacy class representatives
of minimal length.

Clearly, for any choice $\{v_C \in C \mid C \in \Cl(W)\}$ of conjugacy
class representatives, the square matrix $\bigl(\chi(T_{v_C})\bigr)_{\chi, C}$
of character values is invertible as it specializes to the character
table of $W$.  Hence, for each $w \in W$, there are uniquely determined
coefficients $\zeta_{w,C}$ such that
\begin{align*}
	\chi(T_w) = \sum_{C \in \Cl(W)} \zeta_{w,C}\, \chi(T_{v_C}).
\end{align*}
However, the coefficients $\zeta_{w,C}$ cannot be expected to belong to $R$. That is why it is crucial to choose minimal length class representatives.

We denote by $\{T_w^{\vee}:\;w\in W\}$ the dual basis of $\{T_w\,:w\in W\}$ with respect to the standard symmetrising  form (for the definition of the dual basis see, for example, \cite[Definition 7.1.1]{GP}). 
The following theorem has been shown by Geck and Rouquier (see \cite[\S 5.1]{GeckRouquier97} or \cite[Theorem 8.2.3 and Corollary 8.2.4]{GP}).
\begin{Theorem}\label{thm:GRmin} Let $W$ be a finite real reflection group. The elements
	\[
	y_C = \sum_{w \in W} f_{w, C}\, T_w^{\vee}, \quad C \in \Cl(W),
	\]
	form a basis of the center $Z(H)$.
\end{Theorem}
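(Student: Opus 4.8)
\emph{Sketch of the intended proof.}
The plan is to exploit that $H$ is a symmetric $R$-algebra with symmetrising form $\tau$ determined by $\tau(T_w)=\delta_{w,1}$, so that the dual basis $\{T_w^{\vee}\}$ is characterised by $\tau(T_w^{\vee}T_v)=\delta_{w,v}$, and to combine this with the classical facts already used above: $H\otimes_R F$ is split semisimple, and $\Irr(H\otimes_R F)$, $\Irr(W)$ and $\Cl(W)$ all have the same cardinality. First I would record the standard centrality criterion for a symmetric algebra: for $h\in H$ one has $h\in Z(H)$ if and only if the $R$-linear form $\phi_h\colon x\mapsto\tau(hx)$ vanishes on the commutator subspace $[H,H]$. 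The ``only if'' direction is immediate from $\tau(ab)=\tau(ba)$; for the converse one uses that $\tau$ is a non-degenerate pairing on the free $R$-module $H$, so that $\tau\bigl((hx-xh)y\bigr)=\phi_h(xy)-\phi_h(yx)=0$ for all $y\in H$ forces $hx=xh$.

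Next I would check that each $y_C$ is central. A direct computation gives $\phi_{y_C}(T_v)=\tau(y_C T_v)=\sum_{w}f_{w,C}\,\tau(T_w^{\vee}T_v)=f_{v,C}$. Inverting the (over $F$ invertible) character-table matrix $M=\bigl(\chi(T_{w_C})\bigr)_{\chi,C}$ in the defining relation $\chi(T_v)=\sum_C f_{v,C}\,\chi(T_{w_C})$ exhibits $v\mapsto f_{v,C}$ as an $F$-linear combination of the functions $\chi\in\Irr(H\otimes_R F)$; since each such $\chi$ is a character and hence vanishes on commutators, $\phi_{y_C}$ vanishes on $[H\otimes_R F,H\otimes_R F]\supseteq[H,H]$, and the criterion gives $y_C\in Z(H)$.

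For linear independence I would use that a minimal-length representative has class polynomial $f_{w_D,C}=\delta_{C,D}$, which is forced by the uniqueness of the class polynomials; a relation $\sum_C r_C y_C=0$ with $r_C\in R$, evaluated under $\phi$ at $T_{w_D}$, then yields $r_D=\sum_C r_C f_{w_D,C}=0$ for every $D\in\Cl(W)$. For spanning, take $z\in Z(H)$ and write $z=\sum_v a_v T_v^{\vee}$ with $a_v=\tau(z T_v)\in R$. By the criterion $\phi_z$ is a trace form, hence, by split semisimplicity over $F$, an $F$-linear combination of the irreducible characters; applying the defining relation of the $f_{v,C}$ to each character and taking the corresponding combination gives $a_v=\phi_z(T_v)=\sum_C f_{v,C}\,\phi_z(T_{w_C})=\sum_C f_{v,C}\,a_{w_C}$, and substituting back yields $z=\sum_C a_{w_C}\,y_C$ with $a_{w_C}=\tau(z T_{w_C})\in R$. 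Combined with the previous two paragraphs, this shows the $y_C$ form an $R$-basis of $Z(H)$.

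The step I expect to carry the real weight is the passage between coefficients over $R$ and over the splitting field $F$: the character-theoretic manipulations live naturally over $F$, and the whole point of insisting on minimal-length representatives is that it is precisely this choice which forces the coefficients $a_{w_C}$ and the entries $f_{w_D,C}=\delta_{C,D}$ to be integral. Concretely, one must lean on the cited fact that the class polynomials $f_{w,C}$ lie in $R$ and are independent of the chosen minimal-length representatives; without that input the argument only delivers a basis over $F$, and the fact that $Z(H)$ is actually a \emph{free} $R$-module with the asserted basis is not otherwise obvious.
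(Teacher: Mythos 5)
Your argument is correct, and it proves more than the paper itself writes down for this statement: the paper states the real-case theorem with a citation to Geck--Rouquier (and to \cite[Theorem 8.2.3, Corollary 8.2.4]{GP}), and the proof it actually gives is of the complex analogue (Theorem \ref{T1}), which only yields a basis of $Z(FH)$ over the splitting field $F$. The shared core is the same in both treatments: the duality, via the symmetrising form $\tau$, between trace functions and central elements, together with the invertibility of the matrix $\bigl(\chi(T_{w_C})\bigr)_{\chi,C}$ because it specializes to the character table of $W$. Where you diverge is in the packaging and in the coefficient ring. The paper verifies that the $f_C$ form a basis of the space of trace functions on $FH$ and then invokes \cite[Lemma 7.1.7]{GP} to dualize; you instead re-derive the needed duality by hand through the commutator-vanishing criterion ($h\in Z(H)$ iff $\tau(h\,\cdot)$ kills $[H,H]$, using non-degeneracy of the Gram matrix over $R$), and then carry out linear independence and spanning directly over $R$: independence from $f_{w_D,C}=\delta_{C,D}$, spanning from $a_v=\sum_C f_{v,C}\,a_{w_C}$ with $a_{w_C}=\tau(zT_{w_C})\in R$. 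This last step, resting on the integrality $f_{w,C}\in R$ of the class polynomials for minimal-length representatives (which is legitimately part of the setup preceding the statement), is exactly the extra content that upgrades the $F$-statement to an honest $R$-basis of $Z(H)$, i.e.\ the part the paper delegates to the citation. So your route is a self-contained proof of the stated theorem, at the cost of redoing a lemma the paper simply quotes; the paper's route is shorter but, as written, only proves the center statement after base change to $F$.
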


We now examine the complex case. We first assume that the Hecke algebra $H$ admits a symmetrising trace $\tau$. Let $\{b_w:\; w\in W\}$ be a basis of $H$ as $R$-module and let $\{b_w^{\vee}:\; w\in W\}$ be its dual basis with respect to $\tau$. The lack of length function of a complex reflection group $W$ cannot guarantee us the existence of  class polynomials
in the sense of the real case.  However, it remains true that
with respect to any choice of conjugacy class representatives $v_C$,
for each element $w \in W$ there are coefficients $\zeta_{w,C}\in F$, which express the column $(\chi(b_w))_{\chi}$ of character values
as a linear combination of the columns $\bigl(\chi(b_{v_C})\bigr)_{\chi}$.
In this paper, we prove the following:

\begin{Theorem}\label{thm:GRmax} Let $W$ be a finite complex reflection group. For any  choice of conjugacy class representatives $v_C$, the elements
	\[
	y_C = \sum_{w \in W} \zeta_{w, C}\, b_w^{\vee}, \quad C \in \Cl(W),
	\]
	form a basis of the center $Z(H\otimes_RF)$.
\end{Theorem}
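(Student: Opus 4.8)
The plan is to work entirely inside the split semisimple algebra $H_F = H \otimes_R F$, where Wedderburn theory and the theory of symmetrising forms are available in their cleanest form. First I would recall that, since $H_F$ is split semisimple with symmetrising trace $\tau$, the center $Z(H_F)$ has dimension $|\Irr(H_F)| = |\Cl(W)|$, and that the primitive central idempotents $e_\chi$, $\chi \in \Irr(H_F)$, form a basis of $Z(H_F)$. The key classical fact I would invoke is the formula expressing $e_\chi$ in terms of the dual basis: if $\{b_w\}$ is an $R$-basis of $H$ with dual basis $\{b_w^\vee\}$ with respect to $\tau$, then for any element $h \in H_F$ one has $h = \sum_w \tau(h\, b_w^\vee)\, b_w = \sum_w \tau(h\, b_w)\, b_w^\vee$, and more usefully the central element attached to a class function is recovered by a "column of character values" formula. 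Concretely, for each $w$ the element $a_w := \sum_{\chi} \frac{\chi(b_w)}{\omega_\chi}\, e_\chi$ (where $\omega_\chi$ is the Schur element of $\chi$) is central, and one checks from the orthogonality of characters under $\tau$ that in fact $a_w = \sum_{w'} \tau(b_w b_{w'}) \cdot (\text{something})$; the precise identity I would use is the standard one that the map sending $h$ to its "central projection" is computed columnwise on character tables.

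The central step is the following linear-algebra observation. Define the $|\Cl(W)| \times |\Irr(H_F)|$ character-value matrices. By the semisimplicity and the definition of the $\zeta_{w,C}$, the hypothesis says precisely that for every $w$ the column vector $(\chi(b_w))_\chi$ equals $\sum_C \zeta_{w,C}\,(\chi(b_{v_C}))_\chi$. Now I would consider the element $y_C = \sum_w \zeta_{w,C}\, b_w^\vee$ and compute its image under every irreducible character $\chi$ after applying the standard identity $\sum_w \zeta_{w,C}\, \chi(b_w^\vee) = \sum_w \zeta_{w,C}\, \overline{(\text{coordinate})}$; more directly, one shows $y_C$ acts on the simple module $V_\chi$ as the scalar $\omega_\chi^{-1} \chi(b_{v_C})$ or a similar explicit scalar, by plugging the column relation into the formula for the central character. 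This makes each $y_C$ manifestly central (a scalar on every simple module forces centrality in a split semisimple algebra), and it identifies the transition matrix from $\{y_C\}$ to the idempotent basis $\{e_\chi\}$ with (up to the diagonal Schur-element scaling) the matrix $\bigl(\chi(b_{v_C})\bigr)_{\chi, C}$. Since that matrix is invertible — it specializes, under $v_j \mapsto 1$, to the character table of $W$, which is invertible, and invertibility is an open condition / a nonvanishing determinant — the $y_C$ form a basis of $Z(H_F)$.

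So the argument has three beats: (1) $Z(H_F)$ has dimension $|\Cl(W)|$ and is spanned by the $e_\chi$, equivalently detected by central characters; (2) each $y_C$ is central, because feeding the defining column relation $(\chi(b_w))_\chi = \sum_{C'} \zeta_{w,C'}(\chi(b_{v_{C'}}))_\chi$ into the symmetrising-form expansion shows $y_C$ acts as an explicit scalar on each simple module; (3) the resulting $|\Cl(W)| \times |\Cl(W)|$ transition matrix to the $e_\chi$-basis is, up to invertible diagonal scaling by Schur elements, the character-value matrix $\bigl(\chi(b_{v_C})\bigr)$, which is invertible over $F$ because it specializes to the ordinary character table of $W$.

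I expect the main obstacle to be step (2): pinning down the exact scalar by which $y_C$ acts on $V_\chi$, i.e. getting the bookkeeping right between $\tau$, the dual basis, the Schur elements $\omega_\chi$, and the class-function expansion. The cleanest route is probably to first establish the auxiliary identity that $\sum_w \chi(b_w)\, b_w^\vee$ (or $\omega_\chi^{-1}$ times this) equals $e_\chi$ — this is the Hecke-algebra analogue of the classical idempotent formula $e_\chi = \frac{\chi(1)}{|W|}\sum_{g} \chi(g^{-1})\, g$ and follows from orthogonality of irreducible characters with respect to $\tau$ — and then substitute the column relation to get $y_C = \sum_\chi \omega_\chi^{-1}\chi(b_{v_C})\, e_\chi$, from which centrality and the invertible change of basis are both immediate. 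Everything else (semisimplicity, $\dim Z(H_F) = |\Cl(W)|$, invertibility of the character table of $W$, Tits' deformation theorem giving the bijection $\Irr(H_F) \leftrightarrow \Irr(W)$) is quoted from the references already cited in the excerpt.
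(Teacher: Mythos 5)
Your proposal is correct in substance, but it takes a recognizably different route from the paper's, and one scalar in your bookkeeping is off. The paper never mentions Schur elements or primitive central idempotents: for each class $C$ it defines the linear functional $f_C \colon b_w \mapsto \zeta_{w,C}$ on $FH$, observes that $f_C = \sum_{\chi} i(C,\chi)\,\chi$ where $\bigl(i(C,\chi)\bigr)$ is the inverse of the character-value matrix $\bigl(\chi(b_{v_C})\bigr)_{\chi,C}$ (so $f_C$ is a trace function), checks $f_C(b_{v_{C'}}) = \delta_{C,C'}$ to conclude that $\{f_C\}$ is a basis of the space of trace functions, and then applies the symmetric-algebra duality $f \mapsto f^{*} = \sum_w f(b_w)\, b_w^{\vee}$, which carries any basis of the space of trace functions to a basis of $Z(FH)$ (\cite[Lemma 7.1.7]{GP}); finally $f_C^{*} = y_C$. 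Your route instead goes through the idempotent formula $e_\chi = \omega_\chi^{-1}\sum_w \chi(b_w)\, b_w^{\vee}$: unwinding $\zeta_{w,C} = \sum_{\chi} i(C,\chi)\,\chi(b_w)$ gives $y_C = \sum_{\chi} i(C,\chi)\,\omega_\chi\, e_\chi$, which is the same endpoint the paper reaches implicitly (since $\chi^{*} = \omega_\chi e_\chi$). Both proofs therefore rest on the same invertibility of $\bigl(\chi(b_{v_C})\bigr)_{\chi,C}$ via specialization to the character table of $W$. What the paper's version buys is that it bypasses Schur elements entirely, using only the general fact that duals of trace functions span the center of a symmetric algebra; what yours buys is a completely explicit transition matrix to the canonical idempotent basis. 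The correction: the coefficient of $e_\chi$ in $y_C$ is $i(C,\chi)\,\omega_\chi$ --- an entry of the \emph{inverse} character-value matrix times the Schur element --- not $\omega_\chi^{-1}\chi(b_{v_C})$ as you guessed; this does not damage your argument, since a matrix is invertible exactly when its inverse is, but the scalar by which $y_C$ acts on $V_\chi$ is not the one you wrote down.
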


This theorem generalizes the theorem~\ref{thm:GRmin} of Geck-Rouquier
in such a way that it includes the complex case, at the expense of working over
$F$ in place of $R$.  At the same time, it gains us some flexibility
in terms of choosing the conjugacy class representatives.
There is a dual version of the theorem that provides even more flexibility, as follows.

% Our hope was that for the case of $G_4$ there is a choice of class representatives $v_C$ such that the coefficients $\zeta_{w, C}$ coincide with the ones we found after solving the $R$-linear system, obtaining from the  conditions $sz = zs$. However, this was not the case. As a result, we noticed that these coefficients are obtained when replacing  the role of the basis and dual basis in Theorem \ref{thm:GRmax}. This result is not  true only for $G_4$, but for each complex reflection group:

For each class $C \in Cl(W)$, we choose again a representative  $v_C \in C$ and we define
coefficients $g_{w,C} \in F$ by the condition
\begin{align*}
	\chi(b_w^{\vee}) = \sum_{C \in \Cl(W)} g_{w, C}\, \chi(b_{v_C}^{\vee}),\;\text{ for all } \chi \in \Irr(H\otimes_RF).
\end{align*}

\begin{Theorem}\label{thm:complex-center} Let $W$ be a finite complex reflection group. For any  choice of conjugacy class representatives $v_C$, the elements
	\[
	z_C = \sum_{w \in W} g_{w, C}\, b_w, \quad C \in \Cl(W),
	\]
	form a basis of the center $Z(H\otimes_RF)$.
\end{Theorem}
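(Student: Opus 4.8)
The plan is to deduce Theorem~\ref{thm:complex-center} from Theorem~\ref{thm:GRmax} by interchanging the $R$-basis $\{b_w\}$ with its $\tau$-dual $\{b_w^{\vee}\}$; this is the ``dual version'' of the Geck--Rouquier theorem alluded to above. Two preliminary observations make the exchange legitimate. First, since $\tau$ is a symmetrising trace, its associated bilinear form is $R$-nondegenerate, so $\{b_w^{\vee}:w\in W\}$ is again an $R$-basis of $H$. Second, taking $\tau$-duals is an involution on $R$-bases: if $\{c_w\}$ denotes the $\tau$-dual of $\{b_w^{\vee}\}$, then $\tau(b_v^{\vee}b_w)=\tau(b_wb_v^{\vee})=\delta_{vw}$ (using $\tau(xy)=\tau(yx)$), so by uniqueness of dual bases $c_w=b_w$, that is, $(b_w^{\vee})^{\vee}=b_w$. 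Now apply Theorem~\ref{thm:GRmax} to the $R$-basis $\{b_w^{\vee}\}$, with the same class representatives $v_C$: its $\tau$-dual is $\{(b_w^{\vee})^{\vee}\}=\{b_w\}$, and the coefficients that the theorem attaches to this basis are characterised by $\chi(b_w^{\vee})=\sum_C\lambda_{w,C}\,\chi(b_{v_C}^{\vee})$ for all $\chi\in\Irr(H\otimes_RF)$, with $\lambda_{w,C}\in F$; this is exactly the defining relation of the $g_{w,C}$, so $\lambda_{w,C}=g_{w,C}$. Theorem~\ref{thm:GRmax} then gives that the elements $\sum_w g_{w,C}\,(b_w^{\vee})^{\vee}=\sum_w g_{w,C}\,b_w=z_C$, $C\in\Cl(W)$, form a basis of $Z(H\otimes_RF)$, which is the assertion.

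It is worth spelling out the self-contained form of this argument, since it exposes the mechanism. As $H\otimes_RF$ is split semisimple (Malle~\cite{malle1}) and symmetric, the elements $\omega_\chi:=\sum_{w\in W}\chi(b_w^{\vee})\,b_w=\sum_{w\in W}\chi(b_w)\,b_w^{\vee}$ --- the two sums agree because the Gram matrix $\bigl(\tau(b_wb_v)\bigr)$ is symmetric --- are nonzero scalar multiples of the primitive central idempotents of $H\otimes_RF$, by the usual description of the center of a symmetric algebra (see \cite[\S 7.2]{GP}); hence $\{\omega_\chi:\chi\in\Irr(H\otimes_RF)\}$ is an $F$-basis of $Z(H\otimes_RF)$. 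Substituting $\chi(b_w^{\vee})=\sum_C g_{w,C}\,\chi(b_{v_C}^{\vee})$ into the first sum and regrouping yields $\omega_\chi=\sum_C\chi(b_{v_C}^{\vee})\,z_C$. Thus the $z_C$ are the images of the basis $\{\omega_\chi\}$ of $Z(H\otimes_RF)$ under the linear map with matrix $\bigl(\chi(b_{v_C}^{\vee})\bigr)_{\chi,C}$; since $|\Cl(W)|=|\Irr(H\otimes_RF)|$ by Tits' deformation theorem, once that matrix is invertible the $z_C$ form a basis of $Z(H\otimes_RF)$ --- and they are automatically central, being $F$-linear combinations of the $\omega_\chi$.

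Everything therefore comes down to the invertibility over $F$ of $\bigl(\chi(b_{v_C}^{\vee})\bigr)_{\chi,C}$ for an arbitrary choice of class representatives $v_C$ --- equivalently, the well-definedness of the $g_{w,C}$ --- and this is the point where I expect the real work to lie. It is \emph{not} inherited for free from the invertibility of $\bigl(\chi(b_{v_C})\bigr)_{\chi,C}$, because passing to the dual basis replaces the character-value matrix by its product with the inverse Gram matrix, which mixes all $|W|$ columns. I would establish it by specialisation: for the Hecke algebras at hand $\tau$ is normalised so that the canonical basis $\{b_w\}$ specialises to the natural basis $\{w\}$ of $\mathbb{C}W$, whence $\{b_w^{\vee}\}$ specialises to $\{w^{-1}\}$ (the Gram matrix specialises to a permutation matrix, so invertibility is preserved); therefore $\bigl(\chi(b_{v_C}^{\vee})\bigr)_{\chi,C}$ specialises, via the bijection $\Irr(H\otimes_RF)\to\Irr(W)$ of Tits' deformation theorem, to the character table of $W$ evaluated at the class representatives $v_C^{-1}$, which is invertible, so its Laurent-polynomial determinant is nonzero. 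The remaining checks --- matching coefficients, the involution identity $(b_w^{\vee})^{\vee}=b_w$ --- are routine. If, on the other hand, Theorem~\ref{thm:GRmax} is invoked as a black box quantified, as stated, over all $R$-bases and all choices of $v_C$, this invertibility is already part of its content and the proof collapses to the first paragraph; the only point is that, internally, some such argument is needed.
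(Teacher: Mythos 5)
Your proposal is correct, but your main line of argument is genuinely different from the paper's. The paper does not deduce Theorem~\ref{thm:complex-center} from Theorem~\ref{thm:GRmax}: it reproves the dual statement from scratch by the same template as the first one, namely it defines the functions $g_C$ by $g_C(b_w^{\vee})=g_{w,C}$, shows each $g_C$ is a trace function by inverting the matrix $\bigl(\chi(b_{v_C}^{\vee})\bigr)_{\chi,C}$, shows $\{g_C\}_C$ is a basis of the space of trace functions (spanning from the expression of each $\chi$ in terms of the $g_C$, independence from $g_C(b_{v_{C'}}^{\vee})=\delta_{C,C'}$), and then invokes \cite[Lemma 7.1.7]{GP} to conclude that the duals $g_C^{*}=\sum_w g_{w,C}\,b_w=z_C$ form a basis of $Z(FH)$. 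Your first paragraph instead obtains the statement formally by applying Theorem~\ref{thm:GRmax} to the $R$-basis $\{b_w^{\vee}\}$ and using the involution $(b_w^{\vee})^{\vee}=b_w$; since that theorem is stated for an arbitrary $R$-basis and arbitrary class representatives, this reduction is legitimate, and it is arguably the cleaner way to see in what sense the two theorems are dual to each other. Your self-contained second argument, via the elements $\omega_\chi=\sum_w\chi(b_w^{\vee})b_w$ (Schur-element multiples of the primitive central idempotents), is essentially the paper's mechanism in different clothing, since \cite[Lemma 7.1.7]{GP} encodes the same symmetric-algebra duality between trace functions and the center. Finally, you correctly isolate the one real input, the invertibility of $\bigl(\chi(b_{v_C}^{\vee})\bigr)_{\chi,C}$: the paper disposes of it in one sentence (``it specializes to the character table of $W$''), tacitly assuming that the chosen basis, and hence its $\tau$-dual, specializes to the group basis (respectively to $\{w^{-1}\}$); your specialization argument through the Gram matrix is exactly the justification the paper leaves implicit, and is in fact slightly more precise, since the specialized matrix is the character table evaluated at the inverses $v_C^{-1}$, which is still invertible.
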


As an illustration, and to state the fact that for the real case Theorems \ref{thm:GRmax} and \ref{thm:complex-center} give new bases, different from the one of Geck-Rouquier, we apply these theorems to the Coxeter group of type $A_2$
and express the resulting basis of $Z(H)$ as a linear combination of the
standard basis $\{T_w : w \in W\}$ of $H$.

\begin{ex}
	Let $W$ be a finite Coxeter group of type $A_2$ with generators $s, t$ such
	that $sts = tst$.  Then $\{1, s, st\}$ is a set of minimal length class
	representatives of $W$, and $\{1, sts, st\}$ is a set of maximal length
	class representatives.  The Iwahori--Hecke algebra $H$ is generated by
	elements $T_s$ and $T_t$ and defined over the ring
	$R = \mathbb{Z}[u_1^{\pm}, u_2^{\pm}]$. We set $c = u_1 + u_2$ and
	$d = - u_1 u_2$ (where only $d$ is invertible in $R$).  Then
	$T_s^2 = c\, T_s + d$ and $T_t^2 = c\, T_t + d$.
	% We apply the above theorems and express the resulting basis of the center of $H(W)$ as a linear combination of the standard basis $\{T_w : w \in W\}$ of $H(W)$.
	We have the following:
	\begin{itemize}
		\item Theorem~\ref{thm:GRmin}, with minimal length class representatives %$w_C = 1, s, st$
		yields the basis
		\begin{align*}
			T_1, && d^{-1} (T_s + T_t) + d^{-2}\, T_{sts}, && d^{-2} (T_{st} + T_{ts}) + cd^{-3}\, T_{sts}
		\end{align*}
		\item Theorem~\ref{thm:GRmax}, with maximal length class representatives %$w_C = 1, sts, st$
		yields the basis
		\begin{align*}
			T_1, && d^{-2} (T_s + T_t) + d^{-3}\, T_{sts}, && -cd^{-2} (T_s + T_t) + d^{-2} (T_{st} + T_{ts})
		\end{align*}
		\item Theorem~\ref{thm:complex-center}, with minimal length class representatives %$w_C = 1, s, st$
		yields the basis
		\begin{align*}
			T_1, && (T_s + T_t) + d^{-1}\, T_{sts}, && (T_{st} + T_{ts}) + cd^{-1}\, T_{sts}
		\end{align*}
		\item Theorem~\ref{thm:complex-center}, with maximal length class representatives %$w_C = 1, sts, st$
		yields the basis
		\begin{align*}
			T_1, && d (T_s + T_t) + T_{sts}, && -c (T_s + T_t) + (T_{st} + T_{ts})
		\end{align*}
	\end{itemize}
	Note that in all cases the coefficients of the basis elements of $Z(H)$ belong to $R  = \mathbb{Z}[u_1^{\pm}, u_2^{\pm}]$, and that in the last case
	they even lie in the polynomial ring $\mathbb{Z}[u_1, u_2]$.\qed
\end{ex}
It is worth mentioning here that
Theorems \ref{thm:GRmax} and \ref{thm:complex-center} have a general proof, which does not use the case-by-case analysis,
based on the classification of complex reflection groups \cite{ShTo}.

As mentioned before, Theorems \ref{thm:GRmax} and \ref{thm:complex-center} provide bases of the center of $H$ over the splitting field $F$. Choosing an arbitrary basis of the Hecke algebra and arbitrary class representatives, one cannot expect to obtain a basis of the center of $H$ over $R$.
% \textcolor{red}{counterexample}
In fact, for $G_{12}$, there is a choice of conjugacy class representatives $v_C$, where not all the coefficients $g_{w, C}$ of Theorem~\ref{thm:complex-center}
belong to $R$ (for details, see the project's web page~\cite{pr}).
However, for some choice of a basis $\{b_w:\;w\in W\}$ and of class representatives $v_C$, it might turn out that the coefficients $g_{w,C}$ of Theorem \ref{thm:complex-center}  belong to $R$, which means that we obtain a basis of the center $Z(H)$.

In fact, in this paper we show that for the groups
$G_4,\dots, G_8, G_{12}, G_{13}, G_{22}$, i.e., for all exceptional
groups of rank 2, whose associated Hecke algebra is known to be symmetric, we can
make such choices so that we obtain a basis of the center $Z(H)$. We
conjecture that this is true for all complex reflection groups that
satisfy the symmetrising trace conjecture~\ref{BMM sym}.

The next section of this paper explains in detail how we
make these choices.
The final section contains our main results. In our calculations we
used some programs written in GAP, which one can find in the project's
webpage \cite{pr}.

\bigskip\noindent
\textbf{Acknowledgements.}
Work on this project started during the 2-Day Meeting in Stuttgart on Computational Lie Theory in July 2018, supported by DFG (SFB-TRR195).
The authors would like to thank Gunter Malle and Ivan Marin for useful
comments and discussions.

\section{Choosing a Basis} \label{table}

\subsection{Hecke algebras} \label{Hecke}
A complex reflection group $W$ is a finite subgroup of $\mathrm{GL}_n(\mathbb{C})$ generated by \emph{pseudo-reflections} (these are non-trivial elements of $W$ whose fixed points in $\mathbb{C}^n$ form a hyperplane). Real reflection groups, also known as \emph{finite Coxeter groups} are particular cases of complex reflection groups.

We denote by $K$ the \emph{field of definition} of $W$, that is the field generated by the traces on $\mathbb{C}^n$ of all the elements of $W$.  If $K \subseteq \mathbb{R}$, then $W$ is a finite Coxeter group, and
if $K=\mathbb{Q}$, then $W$ is a Weyl group.

A complex reflection group $W$ is \emph{irreducible} if it acts irreducibly on $\mathbb{C}^n$ and, if that is the case, we call $n$ the  \emph{rank} of $W$. Each complex reflection group is a direct product of irreducible ones and, hence, the study of reflection groups reduces to the irreducible case. The classification of irreducible complex reflection groups is due to  Shephard and Todd \cite{ShTo} and it is given by the following theorem.

\begin{Theorem}\label{ShToClas} Let $W \subset \mathrm{GL}_n(\mathbb{C})$ be an irreducible complex
	reflection group. Then, up to conjugacy, $W$ belongs to precisely one of the following classes:
	\begin{itemize}
		\item The symmetric group $S_{n+1}$.
		\item The infinite family $G(de,e,n)$, where  $d,e,n \in \mathbb{N}^*$, such that  $(de,e,n)\neq (1,1,n)$ and $(de,e,n)\neq(2,2,2)$, of all
		$n\times n$ monomial matrices whose non-zero entries are ${de}$-th roots of unity, while the product of all non-zero
		entries is a $d$-th root of unity.
		\smallbreak
		\item The 34 exceptional groups
		$G_4, \, G_5, \, \dots,\, G_{37}$ (ordered with respect to increasing rank).
	\end{itemize}
\end{Theorem}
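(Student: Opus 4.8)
The plan is to prove this classification along the now-standard lines, going back to Shephard and Todd and later streamlined by Cohen and by Lehrer--Taylor, by a chain of reductions that isolates a finite enumeration problem. First I would normalise the geometry: averaging any positive-definite Hermitian form over the finite group $W$ produces a $W$-invariant positive-definite form, so $W$ is conjugate inside $\mathrm{GL}_n(\mathbb{C})$ to a subgroup of the unitary group $\mathrm{U}_n$, and it therefore suffices to classify \emph{unitary reflection groups}. Complete reducibility of unitary representations lets me restrict to the irreducible case assumed in the statement, any reducible group being an orthogonal direct product of irreducible ones acting on the summands. I would then split the analysis according to the \emph{primitivity dichotomy}: say $W$ is imprimitive if $\mathbb{C}^n = V_1 \oplus \cdots \oplus V_m$ with $m \ge 2$ and $W$ permutes the nonzero summands $V_i$, and primitive otherwise.

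For the imprimitive case the goal is to identify $W$ with one of the monomial groups $G(de,e,n)$. Irreducibility forces $W$ to permute the blocks $V_i$ transitively, and a short argument shows the blocks must be $n$ coordinate lines, since every reflection either rescales a single line or interchanges two lines up to a scalar. Writing $A$ for the subgroup of diagonal elements and examining both the permutation quotient $W/A \le S_n$ and the scalars occurring on the lines, one is led to $A$ being the group of diagonal matrices with $de$-th root-of-unity entries whose product is a $d$-th root of unity, and $W/A = S_n$, that is, $W = G(de,e,n)$. The exclusions $(de,e,n) \ne (1,1,n)$ and $(de,e,n) \ne (2,2,2)$ then remove precisely the degenerate parameters giving a reducible action or a group not generated by reflections.

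The genuinely hard part is the primitive case, which produces the symmetric groups $S_{n+1}$ realised as the reflection groups of type $A_n$ together with the finitely many exceptional groups $G_4,\dots,G_{37}$. I would argue by induction on the rank. In rank $1$ the irreducible reflection groups are exactly the cyclic groups of roots of unity. In rank $2$ I would pass to $\mathrm{PU}_2 \cong \mathrm{SO}_3$, whose finite subgroups are cyclic, dihedral, tetrahedral, octahedral, or icosahedral; the cyclic and dihedral images yield reducible or imprimitive groups, while the three exceptional rotation groups lift to the primitive rank-$2$ groups on the list. For rank $n \ge 3$ I would use the local structure of the arrangement of reflecting hyperplanes: each rank-$2$ parabolic subgroup, the pointwise stabiliser of a codimension-$2$ subspace, is one of the groups already classified, and the rigid combinatorics of how these rank-$2$ residues are glued together constrains the global group.

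The main obstacle is exactly this enumeration. One must bound the rank of a primitive group apart from the symmetric groups (none exceeds rank $8$), show that each admissible hyperplane configuration is realised by a unique group up to conjugacy, and then clear away the coincidences among the three classes, such as $S_3 = G(3,3,2)$, $S_4 = G(2,2,3)$ and $G(e,e,2) = I_2(e)$, so that the final list is irredundant. This bookkeeping, rather than any single conceptual step, is where the real labour of the theorem lies.
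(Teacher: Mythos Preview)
The paper does not prove this theorem at all: it is quoted as the classification ``due to Shephard and Todd \cite{ShTo}'' and used as background, with no argument supplied. There is therefore nothing in the paper to compare your proposal against.

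Your outline follows the standard modern route (unitary trick, primitivity dichotomy, monomial analysis for the imprimitive case, and the rank-$2$/parabolic induction for the primitive case, as in Cohen and in Lehrer--Taylor), and as a high-level plan it is sound. Two small points of precision: the exclusion $(de,e,n)=(2,2,2)$ is needed because $G(2,2,2)$ acts \emph{reducibly} on $\mathbb{C}^2$ (it is the Weyl group of type $A_1\times A_1$), not because it fails to be generated by reflections; and $(de,e,n)=(1,1,n)$ is excluded because $G(1,1,n)=S_n$ in its permutation representation on $\mathbb{C}^n$ is reducible, the irreducible incarnation being the $S_{n+1}$ entry in rank~$n$. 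With those adjustments your sketch is an accurate summary of how the classification is actually carried out, but be aware that the ``bookkeeping'' you allude to in the primitive case is substantial and not something one reconstructs in a proof proposal; in practice one cites \cite{ShTo} exactly as the paper does.
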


Let $W$ be a complex reflection group. We denote by $B$ the   \emph{complex braid group} associated to $W$, as defined in \cite[\S 2 B]{BMR}.
A pseudo-reflection $s$ is called \emph{distinguished} if its only nontrivial eigenvalue on $\mathbb{C}^n$ equals $\exp(-2\pi \sqrt{-1}/e_s)$, where  $e_s$ denotes the order of $s$ in $W$. Let $S$ denote the set of the distinguished pseudo-reflections of $W$.

For each $s\in S$ we choose a set of $e_s$ indeterminates $u_{s,1},\dots, u_{s,e_s}$, such that $u_{s,j}=u_{t,j}$ if $s$ and $t$ are conjugate in $W$. We denote by $R$ the Laurent polynomial ring $\mathbb{Z}[u_{s,j},u_{s,j}^{-1}]$. The \emph{generic Hecke algebra} $H$ associated to $W$ with parameters $u_{s,1},\dots, u_{s,e_s}$ is the quotient of the group algebra $R[B]$ of $B$ by the ideal generated by the elements of the form
\begin{equation}
	(\sigma-u_{s,1})(\sigma-u_{s,2})\cdots (\sigma-u_{s,e_s}),
	\label{Hecker}
\end{equation}
where $s$ runs over the conjugacy classes of $S$ and $\sigma$ over the set of \emph{braided reflections} associated to the pseudo-reflection $s$ (for the standard notion of a braided reflection associated to $s$ one can refer to \cite[\S 2 B]{BMR}).
It is enough to choose one relation of the form described in (\ref{Hecker}) per conjugacy class, since the corresponding braided reflections are conjugate in $B$.

We obtain an equivalent definition of $H$ if we expand the relations \eqref{Hecker}.
More precisely, $H$ is the quotient of the group algebra $R[B]$  by the elements of the form
\begin{equation}
	{\sigma }^{e_{s}}-a_{{s},e_{s}-1}{\sigma}^{e_{s}-1}-a_{{s},e_{s}-2}{\sigma}^{e_{s}-2}-\dots-a_{{s},0},
	\label{Hecker2}
\end{equation}
where $a_{{s},e_{s}-k}:=(-1)^{k-1}f_k({u}_{{s},1},\dots,{u}_{{s},e_{s}})$ with $f_k$ denoting the $k$-th elementary symmetric polynomial, for $k=1,\ldots,e_{s}$.
Therefore, in the presentation of $H$, apart from
the \emph{braid relations} coming from the presentation of $B$, we also have the \emph{positive Hecke relations}:
\begin{equation}
	{\sigma}^{e_{s}}=a_{{s},e_{s}-1}{\sigma}^{e_{s}-1}+a_{{s},e_{s}-2}{\sigma}^{e_{s}-2}+\dots+a_{{s},0}.
	\label{ph}
\end{equation}
We notice now that $a_{{s},0}= (-1)^{e_{s}-1}{u}_{{s},1}{u}_{{s},2}\dots {u}_{{s},e_{s}} \in R^{\times}$. Hence,  ${\sigma}$ is invertible in $H$ with
\begin{equation}\label{invhecke}
	{\sigma }^{-1}=a_{s,0}^{-1}\,{\sigma }^{e_{s}-1}-a_{s,0}^{-1}\,a_{{s},e_{s}-1}{\sigma }^{e_{s}-2}-a_{s,0}^{-1}\,a_{{s},e_{s}-2}{\bf s}^{e_{s}-3}-\dots- a_{s,0}^{-1}\,a_{{s},1}.
\end{equation}
We call  relations \eqref{invhecke} the \emph{inverse Hecke relations}.

If $W$ is a real reflection group, $H$ is known as the \emph{Iwahori--Hecke algebra} associated with $W$ (for more details about Iwahori--Hecke algebras one may refer, for example, to \cite[\S 4.4]{GP}). Iwahori--Hecke algebras admit a \emph{standard basis} $(T_w)_{w \in W}$ indexed by the elements of $W$ (see \cite[IV, \S 2]{Bou05}).
Brou\'e, Malle and Rouquier conjectured a similar result for complex reflection groups \cite[\S 4]{BMR}:

\begin{conj}[The BMR freeness conjecture]\label{BMR free}
	The algebra $H$ is a free $R$-module of rank  $|W|$.
\end{conj}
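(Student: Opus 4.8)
The plan is to reduce the conjecture to the statement that, for each $W$, the algebra $H$ is spanned by $|W|$ elements as an $R$-module, and then to verify the latter by going through the Shephard--Todd classification (Theorem~\ref{ShToClas}). The reduction has two halves. That $|W|$ is the smallest conceivable number of generators is immediate: sending each parameter $u_{s,j}$ to the root of unity $\exp(2\pi\sqrt{-1}\,j/e_s)$ turns every positive Hecke relation \eqref{ph} into $\sigma^{e_s}=1$, and since $W$ is the quotient of $B$ by the normal closure of the elements $\sigma^{e_s}$, this specialization identifies $H$ with the group ring $\mathbb{Z}[\mu]\,W$ over a suitable ring $\mathbb{Z}[\mu]$ of roots of unity, which has rank $|W|$. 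Conversely, once a spanning set of size $|W|$ is in hand, the resulting surjection $R^{|W|}\to H$ becomes an isomorphism after applying $-\otimes_R F$, since it is then a surjection between $F$-vector spaces of the same finite dimension: $\dim_F(H\otimes_R F)=\sum_{\chi\in\Irr(W)}\chi(1)^2=|W|$ because $H\otimes_R F$ is split semisimple with character degrees those of $W$, by Malle's theorem and Tits' deformation theorem (both recalled in the introduction). Hence the kernel of $R^{|W|}\to H$ is a torsion submodule of the free $R$-module $R^{|W|}$ over the integral domain $R$, so it vanishes and $H\cong R^{|W|}$ is free of rank $|W|$.

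It then suffices to treat irreducible $W$: since $W=W_1\times\cdots\times W_r$ yields $H\cong H_1\otimes_R\cdots\otimes_R H_r$, and a product of spanning sets of the right sizes is again a spanning set of the right size, the general case follows from the irreducible one. For the symmetric groups and, more generally, for the entire infinite family $G(de,e,n)$, explicit $R$-bases of $H$ of cardinality $|W|$ are already available, through the theory of Ariki--Koike algebras and their subalgebras. What remains is the list of $34$ exceptional groups $G_4,\dots,G_{37}$, each of which must be handled individually.

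For a fixed exceptional $W$ I would fix the BMR presentation of $H$ by braided reflections, choose a candidate set $\mathcal{B}$ of $|W|$ words in the generators and their inverses --- typically assembled inductively along a chain of parabolic subalgebras together with suitable coset representatives --- and then prove that $R\mathcal{B}$ is a left ideal of $H$ by checking, for each generator $\sigma$ and each $b\in\mathcal{B}$, that $\sigma b\in R\mathcal{B}$ and $\sigma^{-1}b\in R\mathcal{B}$, rewriting systematically with the braid relations and the (inverse) Hecke relations \eqref{ph}, \eqref{invhecke}. Since $1\in\mathcal{B}$, this forces $R\mathcal{B}=H$, and the reduction above finishes the argument. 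The main obstacle is precisely this closure computation: one must arrange the rewriting so that it terminates, keep every intermediate coefficient inside $R$, and make compatible choices of reductions --- essentially a confluence problem. For the rank-$2$ groups $G_4,\dots,G_{22}$ that are relevant to the present paper this is feasible and has been carried out (by Marin--Pfeiffer and by the first author), whereas for the higher-rank exceptional groups, notably $G_{31}$, $G_{32}$, $G_{34}$ and the Weyl groups of types $E_6$, $E_7$ and $E_8$, the bookkeeping far exceeds what can be done by hand and calls for a carefully organised, computer-assisted argument.
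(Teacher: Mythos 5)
The statement you were asked to prove is stated in the paper as a \emph{conjecture}: the paper offers no proof of it, but records that it is by now a theorem, established by many authors through a case-by-case analysis over the Shephard--Todd classification (the paper points to \cite[Theorem 3.5]{BCCK} for the state of the art). So there is no internal proof to compare with, and your proposal must be judged on its own. Its overall architecture does match how the theorem was actually proved in the literature (reduce to exhibiting a spanning set of size $|W|$, then treat the infinite series via Ariki--Koike theory and the exceptional groups one by one by rewriting in the BMR presentation), but as written it has a genuine logical gap in the reduction step. You justify $\dim_F(H\otimes_R F)=\sum_\chi \chi(1)^2=|W|$ by appealing to Malle's split semisimplicity theorem and Tits' deformation theorem; both of these presuppose that $H$ is free of rank $|W|$ (Tits' deformation theorem is a statement about a free algebra of finite rank, and Malle's theorem is proved under the freeness hypothesis), so using them here is circular. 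The non-circular version of this criterion is the lemma of Brou\'e--Malle--Rouquier (refined by Marin): the lower bound on the generic dimension comes not from Malle--Tits but from the monodromy/KZ identification of a suitable scalar extension of $H$ with the group algebra of $W$ (or from a more careful specialization argument at the cyclotomic point), and only then does your ``kernel is torsion, hence zero'' conclusion go through.

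The second, larger issue is that the heart of the theorem --- the closure computations for the $34$ exceptional groups --- is not actually carried out: you describe the strategy and then delegate it (``has been carried out'', ``calls for a computer-assisted argument''), which is a program rather than a proof; this is precisely where the many papers cited in \cite{BCCK} do their work. Finally, a factual slip: $E_6$, $E_7$, $E_8$ (and the other real exceptional groups) are not hard computational cases at all --- for real $W$ the Iwahori--Hecke algebra has the classical standard basis $(T_w)_{w\in W}$ of Bourbaki, as the paper itself recalls --- whereas the genuinely difficult high-rank cases are the non-real groups such as $G_{29}$, $G_{31}$, $G_{32}$, $G_{33}$, $G_{34}$.
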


This conjecture is now a theorem, thanks to work of several people who used a case-by-case analysis approach, in order to prove the case of all irreducible complex reflection groups. A detailed state of the art of the proof can be found in \cite[Theorem 3.5]{BCCK}.

A symmetrising trace on a free algebra is a trace map $\tau$ that induces a non-degenerate bilinear form, meaning that the determinant of the matrix $(\tau(bb'))_{b,b'\in \mathcal{B}}$ is a unit in the ring over which we define the algebra for some (and hence every) basis $\mathcal{B}$ of the algebra. 
For Iwahori--Hecke algebras there exists a unique symmetrising trace, given by $\tau(T_w)=\delta_{1w}$  \cite[IV, \S 2]{Bou05}.
Brou\'e, Malle and Michel  conjectured the existence of a symmetrising trace also for non-real complex reflection groups \cite[\S2.1, Assumption 2(1)]{BMM}:

\begin{conj}[The BMM symmetrising trace conjecture]
	\label{BMM sym}
	There exists a linear map $\tau: H\rightarrow R$ such that:
	\begin{itemize}
		\item[$(1)$] $\tau$ is a symmetrising trace, that is, we have $\tau(h_1h_2)=\tau(h_2h_1)$ for all  $h_1,h_2\in H$, and the bilinear map $H\times H\rightarrow R$,  $(h_1,h_2)\mapsto\tau(h_1h_2)$ is non-degenerate. \smallbreak
		\item[$(2)$] $\tau$ becomes the canonical symmetrising trace on $K[W]$ when ${u}_{s,j}$ specialises to ${\rm exp}(2\pi \sqrt{-1}\; j/e_s)$ for every $s\in S$ and $j=1,\ldots, e_s$. \smallbreak
		\item[$(3)$]  $\tau$ satisfies
		\begin{equation}\label{extra}
			\tau(T_{b^{-1}})^* =\frac{\tau(T_{b\boldsymbol{\pi}})}{\tau(T_{\boldsymbol{\pi}})}, \quad \text{ for all } b \in B(W),
		\end{equation}
		where $b\mapsto T_{b}$ denotes the restriction of the natural surjection $R[B] \rightarrow H$ to $B$,
		$x \mapsto x^*$ is the automorphism of  $R$ given by $u_{s,j} \mapsto u_{s,j}^{-1}$ and $\boldsymbol{\pi}$ the element $z^{|Z(W)|}$, with $z$ being the image of a suitable generator of the center of $B$ inside $H$.
		\smallbreak
	\end{itemize}
\end{conj}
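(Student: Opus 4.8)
The plan is to prove the statement case by case along the Shephard--Todd classification (Theorem~\ref{ShToClas}), reducing first to the irreducible groups: for a direct product $W = W_1 \times W_2$ one has $H \cong H_1 \otimes_R H_2$, and the tensor product $\tau_1 \otimes \tau_2$ of symmetrising traces again satisfies $(1)$--$(3)$ componentwise, so it suffices to treat each irreducible $W$. For a fixed irreducible $W$ I would start from the BMR freeness property (Conjecture~\ref{BMR free}), which is now a theorem, to fix an explicit $R$-basis $\mathcal{B} = \{b_w : w \in W\}$ of $H$ with $b_1 = 1$, chosen so that each $b_w$ is the image $T_{\beta_w}$ of a distinguished word $\beta_w$ in the braided reflections and so that $\mathcal{B}$ specialises to the group basis $(w)_{w \in W}$ of $K[W]$ under the substitution of condition~$(2)$. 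The candidate trace is then the $R$-linear map determined by $\tau(b_w) = \delta_{1,w}$.

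The first task is part~$(1)$. For the trace property it suffices, by $R$-bilinearity, to verify $\tau(b_v b_w) = \tau(b_w b_v)$ for all $v, w \in W$, which reduces to controlling the structure constants of $H$ with respect to $\mathcal{B}$, i.e.\ to rewriting each product $b_v b_w$ as an $R$-combination of basis elements using the braid relations together with the positive and inverse Hecke relations~\eqref{ph} and~\eqref{invhecke}. For non-degeneracy I would form the Gram matrix $G = \bigl(\tau(b_v b_w)\bigr)_{v,w}$ and show $\det G \in R^{\times}$. The idea is to pair each $b_w$ with a partner $b_{w'}$ so that $\tau(b_{w'} b_w)$ is a unit of $R$ while, for the remaining $v$, the entry $\tau(b_v b_w)$ sits strictly lower in a length/valuation filtration of $\mathcal{B}$; after reordering, $G$ becomes triangular with units along the diagonal up to a monomial in the parameters, forcing $\det G$ to be a unit.

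Part~$(2)$ is then immediate from the construction: the substitution $u_{s,j} \mapsto \exp(2\pi\sqrt{-1}\, j / e_s)$ sends $b_w$ to $w \in K[W]$ and hence $\tau$ to the functional $w \mapsto \delta_{1,w}$, which is the canonical symmetrising trace of $K[W]$. For part~$(3)$ I would regard both sides of~\eqref{extra} as functions on $B$ and use that $\boldsymbol{\pi}$ is central in $B$ and acts by a scalar on each irreducible representation of $H \otimes_R F$; this reduces the identity to its values on a generating set of braided reflections $\sigma$ and on $\boldsymbol{\pi}$ itself, where it follows from the explicit Hecke relations~\eqref{ph}, the inverse relations~\eqref{invhecke}, and the computed value of $\tau(T_{\boldsymbol{\pi}})$.

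The main obstacle is that no uniform, classification-free proof is available: all three verifications hinge on explicit knowledge of the multiplication in $H$, which itself rests on solving the rewriting problem for words in the braided reflections modulo the Hecke relations. Controlling these structure constants --- and with them both the Gram determinant and the duality identity~\eqref{extra} --- uniformly across the infinite family $G(de,e,n)$ and the high-rank exceptional groups is exactly where the difficulty concentrates, which is why the conjecture is presently established only for certain groups, among them the eight rank-$2$ exceptional groups studied in this paper.
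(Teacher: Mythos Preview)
The statement you are addressing is recorded in the paper as a \emph{conjecture}, not a theorem; the paper does not prove it in general and you rightly acknowledge this in your final paragraph. What the paper does contribute is an independent computational verification for the eight rank-$2$ exceptional groups $G_4,\dots,G_8,G_{12},G_{13},G_{22}$ (Proposition~\ref{tracef}): with the explicit basis $\mathcal{B}$ constructed from the coset table, it defines $\tau$ exactly as you propose (coefficient of $1$), computes the full Gram matrix $A=(\tau(bb'))$, checks directly that $A$ is symmetric with $\det A\in R^\times$, and then verifies the finitely many identities $\tau(T_{x^{-1}\boldsymbol{\pi}})=0$ for $x\in\mathcal{B}\setminus\{1\}$. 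Your outline for parts~(1) and~(2) is in the same spirit, though your triangularity heuristic for the Gram determinant is not what the paper does---it simply computes $\det A$ in each case---and such a filtration argument is not known to go through for complex $W$ precisely because there is no canonical length function.

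There is a genuine gap in your approach to part~(3). Neither $b\mapsto\tau(T_{b^{-1}})^*$ nor $b\mapsto\tau(T_{b\boldsymbol{\pi}})/\tau(T_{\boldsymbol{\pi}})$ is multiplicative on $B$, since $\tau$ is a trace and not an algebra homomorphism; agreement of the two sides of~\eqref{extra} on a generating set of braided reflections therefore does not imply agreement on all of $B$, and the centrality of $\boldsymbol{\pi}$ alone does not repair this. The paper avoids this issue by invoking the Malle--Michel reduction: provided $\mathcal{B}\subset B$ with $1\in\mathcal{B}$ and some further properties, condition~(3) is \emph{equivalent} to the finite list of vanishing conditions~\eqref{extra2}, which can then be checked by direct computation inside the matrix model of $H$. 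That reduction, rather than a generator check, is the mechanism actually used.
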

Since we have the validity of the BMR freeness conjecture, we know  \cite[\S 2.1]{BMM} that if there exists such a linear map $\tau$, then it is unique. If this is the case, we call $\tau$ the \emph{canonical symmetrising trace on}  $H$.

Malle and Michel \cite[Proposition 2.7]{MalleMichel} proved that if the Hecke algebra $H$ admits a basis $\mathcal{B}\subset B$ consisting of braid group elements that satisfies  certain properties (among them that $1\in \mathcal{B}$), then Condition \ref{extra}
is equivalent to:
\begin{equation}\label{extra2}
	\tau(T_{x^{-1}\boldsymbol{\pi}})=0, \quad \text{ for all } x \in \mathcal{B} \setminus \{1\}.
\end{equation}

Apart from the real case, the BMM symmetrising trace conjecture is known to hold  for a few exceptional groups and for the infinite family (detailed references can be found in \cite[Conjecture 3.3]{CC}).

We now describe the representation theory of Hecke algebras. In \cite[\S 5]{malle1} Malle associates to each complex reflection group $W$ a positive integer  $N_W$ and he defines for each $s\in S$,  a set of $e_s$ indeterminates $v_{s,1},\dots, v_{s,e_s}$
by the property	\begin{equation}\label{split}
	v_{s,j}^{N_W}=\exp(-2\pi \sqrt{-1}\; j/e_s)u_{s,j}.
\end{equation} We denote by $F$ the field $\mathbb{C}(v_{s,j})$ and
by extension of scalars we obtain the algebra
$FH:=H\otimes_{R}F$, which is split semisimple (\cite[Theorem 5.2]{malle1}). By Tits' deformation theorem \cite[Theorem 7.4.6]{GP}, the specialization $v_{s,j} \mapsto 1$ induces a bijection
Irr$(FH)\rightarrow$ Irr$(W)$.

Models of irreducible representations of the Hecke algebra $FH$ associated with certain irreducible
complex reflection groups have been computed by Malle and
Michel~\cite{MalleMichel}, and are readily available in Jean Michel's
development version~\cite{chevie-jm} of the CHEVIE package~\cite{chevie}.
In this paper, we use these models to evaluate the irreducible characters
on some particular elements, when constructing an explicit basis of the center of
the Hecke algebra.

\subsection{Coset table}
Let $W$ be a complex reflection group with associated Hecke algebra $H$ defined over $R$. The goal of this paper is to provide, at least in some examples, a basis of the center $Z(H)$ of $H$ as $R$-module. For this purpose, we first find a basis $\mathcal{B}$ of the Hecke algebra and then we describe the basis elements of $Z(H)$ as linear combinations of elements of $\mathcal{B}$.

In this section we explain the method we use in order to find a basis $\mathcal{B}$ for  the exceptional groups $G_4,\,\dots,\, G_8,\, G_{12},\, G_{13},\, G_{22}$. These groups are exactly the exceptional
complex reflection groups of rank 2 for which we know, apart from the validity of the BMR freeness conjecture \ref{BMR free},  the validity of the BMM symmetrising trace conjecture \ref{BMM sym} as well.
Our method is not an algorithm and we cannot be sure it works in general since, as we will see in a while, one needs to make some crucial choices, which are a product of experimentation and experience. At the end of this section we give in detail the example of $G_4$, where the reader can see thoroughly our methodology and arguments.

We recall that $W$ is generated by distinguished pseudo-reflections $s$. We choose a particular generator $s_0$ of $W$ and we denote by $W'$ the parabolic subgroup of $W$ generated by $s_0$ and by $H'$ the subalgebra of $H$ generated by $\sigma_0$.
The action of $W$ on the cosets of $W'$
defines a graph with vertex set $\{W' x \mid x \in W\}$
and edges $W' x \stackrel{s}{\longrightarrow} W'xs$, for $s$ running over the generators of $H$. In the project's webpage \cite{pr} the reader can find these graphs for the examples we are dealing with in this paper.

We now choose class representatives $x_i$ as follows: We choose some representatives  as the \emph{anchor coset representatives}, and we pick representatives for the
remaining cosets along the spanning tree. We always choose  $x_1=1$.

The coset representatives $x_i$ are in fact explicit words in generators of $W$. These generators are in one to one correspondence with generators of the Hecke algebra $H$, by sending $s$ to $\sigma$. Hence, we can obtain from the elements $x_i$ corresponding elements inside the Hecke algebra, which we also denote by $x_i$.

Our goal now is to prove that this chosen set
$\{x_i\,:\,i=1,\dots,|W/W'|\}$ is a basis of $H$ as $H'$-module.
Since $H$ is a free $H'$-module of dimension $|W/W'|$ (see, for example, \cite{CC}), we only have to prove that $\{x_i\}$ is a spanning set for $H$. By construction, we always have $x_1=1$ and, hence, it is enough to prove that for each  $x_i$,   the elements $x_i.\sigma$ are  linear combinations of the form
$\sum_i h_i \cdot x_i$, where $h_i \in H'$.

In order to prove that, we construct a \emph{coset table}, where we list this linear combination not only for the elements $x_i.\sigma$, but also for the elements $x_i.\sigma^{-1}$. The reason we calculate these extra linear combinations is because they are prerequisite to calculating the linear combinations for the elements $x_i.\sigma$.

In order to fill the coset table we use a program created in GAP. This program uses the positive and inverse Hecke relations \eqref{ph} and \eqref{invhecke} and also some simple hand-calculations. On the project's webpage \cite{pr} one can find these programs for the aforementioned exceptional groups of rank 2.

The completed coset table and the fact that $H'$ is a free $R$-module provides a basis for the Hecke algebra $H$ over $R$ as follows:
$\mathcal{B}=\{x_i,\; \sigma_0x_i,\dots, \sigma_0^{e_s-1}x_i\,:\,i=1,\dots,|W/W'|\}$.

Our goal now is to express every element of the Hecke algebra as $R$-linear combination of the elements of  $\mathcal{B}$. In order to do that, we use again the completed coset table. The expression of $x_i.\sigma$ as linear combination of the form
$\sum_i h_i \cdot x_i$, $h_i \in H'$ yields a representation $\rho$
for $H$ as a free module over the subalgebra~$H'$. We use the  matrix models of this representation in order to compute the image of each word $x$ in generators  $\sigma$ (and their
inverses) inside the Hecke algebra $H$ as the image of the vector
\begin{align*}
	1_H = 1_{H'} x_1 + 0 x_2 + 0x_3+\dots + 0 x_{|W/W'|} = (1_{H'}, 0,0,\dots,0)
\end{align*}
under the product of the matrices corresponding to the letters of the word $x$.  We can see this as follows: Let $x$ be the word $\sigma_{i_1}^{m_1}\cdots\sigma_{i_r}^{m_r}$, where $m_1,\dots,m_r\in \mathbb{Z}^{*}$. Then the image of $x$ inside $H$ can be computed as the product $1_H\cdot\rho(\sigma_{i_1})^{m_1}\cdots\rho(\sigma_{i_r}^{m_r})$. This is a vector with coefficients in $H'$ and, hence, it corresponds to an $H'$-linear combination of $x_i$'s and, hence, to an $R$-linear combination of elements in $\mathcal{B}$.

In order to make this clearer to the reader, we give the following example of the exceptional group $G_4$:

\begin{ex}\label{exa}
	Let $G_4=\langle s_1, s_2 \mid s_1^3=s_2^3=1, \;s_1s_2s_1=s_2s_1s_2\rangle.$
	The Hecke algebra $H$ of $W$ is defined over $R=\mathbb{Z}[u_1^{\pm}, u_2^{\pm}, u_3^{\pm}]$ and it admits the following  presentation:
	$$H = \langle \sigma_1, \sigma_2 \mid \sigma_1\sigma_2\sigma_1 = \sigma_2\sigma_1\sigma_2,\; \sigma_1^3 = a \sigma_1^2 + b\sigma_1+ c,\;\sigma_2^3 = a \sigma_2^2 + b\sigma_2+ c \rangle,$$ with suitable $a,b,c\in R$.
	For the sake of brevity, we set $\sigma_1' = \sigma_1^{-1}$ and $\sigma_2' = \sigma_2^{-1}$.
	Let $z:= (s_1s_2)^3\in Z(W)$ and let $w: =s_1s_2s_1 = s_2s_1s_2 $. We have $z= (s_1s_2)^3 = (s_1s_2s_1)(s_2s_1s_2) = w^2$. We also denote by $W'$ the parabolic subgroup of $W$ generated by $s_1$ and by $H'$ the subalgebra of $H$ generated by $\sigma_1$.

	As we have already mentioned, the action of $W$ on the cosets of $W'$
	defines a graph with vertex set $\{W' x \mid x \in W\}$
	and edges $W' x \stackrel{u}{\longrightarrow} W'xu$, labelled by $u\in\{s_1, s_2\}$.
	The following diagram shows this action graph, except for a few edges:
	\begin{center}
		\begin{tikzpicture}
			\def \n {4}
			\foreach \s in {1,3,...,8}
			\node (\s) at ({180 - (180/\n * (\s-1))}:2) {$_{\s}$};
			\foreach \s in {2,4,...,8}
			\node (\s) at ({135 - (180/\n * (\s-2))}:1) {$_{\s}$};
			
			\foreach \s in {1,5} {
				\pgfmathtruncatemacro{\t}{1+\s}
				\pgfmathtruncatemacro{\a}{2+\s}
				\pgfmathtruncatemacro{\b}{3+\s}
				\draw[r,very thick] (\s) -- (\t);
				\draw[b] (\t) -- (\a);
				\draw[b] (\b) -- (\t);
				\draw[b,very thick] (\a) -- (\b);
			}
			\draw[r] (4) -- (5);
			\draw[r] (6) -- (4);
		\end{tikzpicture}
	\end{center}
	
	Here, instead of labeling the edges with $u$, we use the colours red and blue. More precisely, blue edges belong to generator $s_1$ and  red edges belong to generator $s_2$. Fat edges indicate a spanning tree of the coset graph.
	The vertices are labeled $_1, _2, \dots, _8$, corresponding to
	coset representatives $x_1=1,\; x_2=s_2,\; x_3=s_2s_1s_2,\; x_4=s_2s_1s_2s_1,\;x_5=zx_1,\; x_6=zx_2,\; x_7=zx_3,\; x_8=zx_4$. We make this choice of representatives as follows:
	We choose $x_1 = w^0$, $x_3 = w^1$, $x_5 = w^2$ and $x_7 = w^3$
	as anchor coset representatives  and we pick representatives for the
	remaining cosets along the spanning tree: $x_{2\ell} = x_{2\ell-1}u, \; \ell=1,2,3,4$ with generator
	$u = s_1, s_2$ as in the edge connecting coset $_{2\ell-1}$ to coset $_{2\ell}$ in the
	coset graph.
	
	By sending $s_1$ and $s_2$ to $\sigma_1$ and $\sigma_2$, respectively we can obtain from the elements $x_i$, $z$ and $w$ corresponding elements inside the Hecke algebra, which we also denote by $x_i$, $z$ and $w$.  The elements $z$ and $w$ are also central elements in $H$ (see \cite[Theorem 12.8]{bes} and  \cite[Theorem 2.2.4]{BMR}).
	
	As we have explained earlier, in order to prove that the set $\{x_i,\;, i = 1, \dots, 8\}$ is a basis of $H$ as $H'$-module  we construct the following coset table.
	
	\[
	\begin{array}{|l||c|c|c|c|}
		\hline
		x_i,\;i=1,\dots,8 & x_i.\sigma_1 & x_i.\sigma_2 & x_i.\sigma_1' & x_i.\sigma_2' \\ \hline \hline
		x_1 = z^0  & \sigma_1 \cdot x_1 & x_2 & \sigma_1' \cdot x_1 & \\
		x_2 = z^0 \sigma_2 & \sigma_1' \cdot x_3 & & x:\sigma_1' & x_1 \\
		x_3 = z^0 \sigma_2\sigma_1\sigma_2 & x_4 &  \sigma_1 \cdot x_3 &\sigma_1 \cdot x_2 & \sigma_1' \cdot x_3 \\
		x_4 = z^0 \sigma_2\sigma_1\sigma_2\sigma_1 & x_4:\sigma_1 & \sigma_1' \cdot x_5 & x_3 & x_4:\sigma_2' \\
		\hline
		x_5 = z^1 & \sigma_1 \cdot x_5 & x_6 & \sigma_1' \cdot x_5& \sigma_1 \cdot x_4 \\
		x_6 = z^1 \sigma_2 & \sigma_1' \cdot x_7 & x_6:\sigma_2 & x_6:\sigma_1' & x_5\\
		x_7 = z^1 \sigma_2\sigma_1\sigma_2  & x_8 & \sigma_1 \cdot x_7 & \sigma_1 \cdot x_6 & \sigma_1' \cdot x_7 \\
		x_8 = z^1\sigma_2\sigma_1\sigma_2\sigma_1 & x_8:\sigma_1 & & x_7 & \\ \hline
	\end{array}
	\]
	In this table, some notation is used as short-hand for more complex expressions. More precisely:
	\begin{itemize}
		\item $x_i : u$ indicates that $x_i.u$ can be computed from other entries in
		the table by using the relation $u = a + bu' + cu'u'$. This relation is obtained from the positive Hecke relation \eqref{ph} if we multiply both sides with $u'u'$. We see how we use such a relation in the example of  $x_4.\sigma_1$. We have:
		$x_4.\sigma_1=ax_4+bx_4.\sigma_1'+cx_4.\sigma_1'\sigma_1'$. We now notice that $x_4.\sigma_1'=x_3$. Therefore, $x_4.\sigma_1=ax_4+bx_3+cx_3.\sigma_1'$. Since $x_3.\sigma_1'=\sigma_1.x_2$ we have  that $x_4.\sigma_1=ax_4+bx_3+c\sigma_1.x_2$.
		\item Similarly, $x : u'$ indicates that $x.u'$ can be computed by
		using the inverse Hecke relation \eqref{invhecke} $u' = c^{-1}u^2 - ac^{-1}u - bc^{-1}$.
	\end{itemize}
	
	It remains now to compute the entries $x_1.\sigma_2'$, $x_2.\sigma_2$, $x_8.\sigma_2$ and
	$x_8.\sigma_2'$. We compute these  four excluded cases as follows, using braid relations and existing entries from the coset table.
	$$\begin{array}{lcl}
		x_2.\sigma_2 &=& x_2.(\sigma_2\sigma_1\sigma_2)\sigma_2'\sigma_1'=x_2.(\sigma_1\sigma_2\sigma_1)\sigma_2'\sigma_1' = x_4.\sigma_2'\sigma_1' \smallbreak\smallbreak\\
		x_8.\sigma_2' &=& x_8.(\sigma_2'\sigma_1'\sigma_2')\sigma_2\sigma_1=x_8.(\sigma_1'\sigma_2'\sigma_1')\sigma_2\sigma_1
		= x_6.\sigma_2\sigma_1\smallbreak\smallbreak\\
		x_1.\sigma_2' &=& x_1 : \sigma_2'\smallbreak\smallbreak \\
		x_8.\sigma_2 &=& x_8 : \sigma_2
	\end{array}$$
	
	The completed coset table and the fact that $H'$ is a free $R$-module  with basis $\{1,\;\sigma_1,\,\sigma_1^2\}$ proves the following:
	\begin{Proposition}
		With the above notation
		\begin{enumerate}[leftmargin=*]
			\item[(i)] $H$ is a free $H'$-module with basis $\{x_i \;:\; i = 1,\dots, 8\}$.
			\item[(ii)] $H$ is a free $R$-module with basis $\{b_j\;:\; j = 1,\dots, 24\} = \{x_i,\; \sigma_1 x_i,\; \sigma_1^2 x_i \;:\; i = 1,\dots, 8\}$.
		\end{enumerate}
	\end{Proposition}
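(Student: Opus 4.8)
The plan is to argue exactly as anticipated in the preceding discussion. For part~(i), I would invoke the known fact that $H$ is a free $H'$-module of rank $|W/W'| = 8$ (see, for example, \cite{CC}); since a surjective endomorphism of a finitely generated module over a commutative ring is injective, hence an isomorphism, it then suffices to show that the eight elements $x_1,\dots,x_8$ span $H$ as an $H'$-module. So I would set $M = \sum_{i=1}^{8} H' x_i$, an $H'$-submodule (in particular an $R$-submodule) of $H$ containing $x_1 = 1$. The completed coset table shows precisely that $M$ is stable under right multiplication by each of $\sigma_1, \sigma_2, \sigma_1', \sigma_2'$: its $(x_i, u)$-entry exhibits $x_i . u$ as an explicit element of $H' x_1 + \dots + H' x_8 = M$. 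Since $H$ is generated as an $R$-algebra by the invertible elements $\sigma_1, \sigma_2$, every element of $H$ is an $R$-linear combination of words in $\sigma_1^{\pm 1}, \sigma_2^{\pm 1}$, so a straightforward induction on word length gives $H = 1 \cdot H \subseteq M$, hence $M = H$ and $\{x_1, \dots, x_8\}$ is an $H'$-basis of $H$.

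The real content of part~(i) is therefore the verification of the coset table, and this is where I expect the main (though entirely routine) obstacle to lie: one must check that every one of its $8 \times 4$ entries genuinely reduces to an $H'$-linear combination of the $x_i$. The direct entries (such as $x_1 . \sigma_2 = x_2$ or $x_3 . \sigma_1 = x_4$) are immediate from the choice of coset representatives along the spanning tree. The entries marked ``$x_i : u$'' or ``$x : u'$'' are handled by a single application of the positive Hecke relation \eqref{ph} or the inverse Hecke relation \eqref{invhecke}, which rewrites $x_i . u$ in terms of entries already placed in the table, as illustrated for $x_4 . \sigma_1 = a x_4 + b x_3 + c\, \sigma_1 . x_2$. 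The only entries requiring more are the four exceptions $x_1 . \sigma_2'$, $x_2 . \sigma_2$, $x_8 . \sigma_2$, $x_8 . \sigma_2'$; these are treated using the braid relation $\sigma_1\sigma_2\sigma_1 = \sigma_2\sigma_1\sigma_2$ (and its inverse form), for instance $x_2 . \sigma_2 = x_2 . (\sigma_2\sigma_1\sigma_2)\sigma_2'\sigma_1' = x_2 . (\sigma_1\sigma_2\sigma_1)\sigma_2'\sigma_1' = x_4 . \sigma_2'\sigma_1'$, after which two further table lookups land the answer back in $M$. One should also confirm that the dependencies among the entries are acyclic, so that the table can in fact be filled in consistently; this is clear from the order in which the computations are carried out above.

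For part~(ii), I would combine~(i) with the fact that $H'$ --- the subalgebra of $H$ generated by $\sigma_1$ --- is the cyclic Hecke algebra of $W' = \langle s_1 \rangle \cong \mathbb{Z}/3\mathbb{Z}$, hence the free $R$-module $R[\sigma_1]/(\sigma_1^3 - a\sigma_1^2 - b\sigma_1 - c)$ of rank $3$ with basis $\{1, \sigma_1, \sigma_1^2\}$ (the positive Hecke relation reduces the higher powers of $\sigma_1$ and \eqref{invhecke} the negative ones, while there is no $R$-linear relation among $1, \sigma_1, \sigma_1^2$). By~(i) every $h \in H$ is uniquely $h = \sum_{i=1}^{8} h_i x_i$ with $h_i \in H'$, and each $h_i$ is uniquely $\sum_{k=0}^{2} r_{i,k} \sigma_1^k$ with $r_{i,k} \in R$; substituting gives a unique expression $h = \sum_{i,k} r_{i,k}\, \sigma_1^k x_i$. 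Hence $\{\sigma_1^k x_i : 0 \le k \le 2,\ 1 \le i \le 8\}$ is an $R$-basis of $H$, consisting of $3 \cdot 8 = 24 = |G_4|$ elements, in agreement with the validity of the BMR freeness conjecture~\ref{BMR free} for $G_4$. This completes the argument.
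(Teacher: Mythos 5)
Your argument is correct and follows essentially the same route as the paper: freeness of rank $|W/W'|=8$ over $H'$ is quoted from the literature, so that only the spanning property needs to be checked, and this is exactly what the completed coset table certifies, after which part~(ii) follows by combining the $H'$-basis $\{x_i\}$ with the $R$-basis $\{1,\sigma_1,\sigma_1^2\}$ of $H'$. Your additional explicit justifications --- that a spanning set of cardinality equal to the rank is automatically a basis (via surjectivity implying injectivity over the commutative ring $H'$), and that the dependencies among the coset-table entries must be acyclic --- merely make precise what the paper leaves implicit.
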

	We will now express every element of the Hecke algebra as $R$-linear combination of the elements of $\{b_j\;:\; j = 1,\dots, 24\}$, using the following representation  for $H$ as a free module over the subalgebra~$H'$, which comes from the completed coset table:
	\begin{align*}
		\rho(\sigma_1)&:=\small{\left[
			\begin{array}{cccc|cccc}
				\sigma_1 & . & . & . & . & . & . & . \\
				. & . & \sigma_1' & . & . & . & . & . \\
				. & . & . & 1 & . & . & . & . \\
				. & c\sigma_1 & b & a & . & . & . & . \\ \hline
				. & . & . & . & \sigma_1 & . & . & . \\
				. & . & . & . & . & . & \sigma_1' & . \\
				. & . & . & . & . & . & . & 1 \\
				. & . & . & . & . & c\sigma_1 & b & a \\
			\end{array}
			\right]}
		\\
		\rho(\sigma_2)&:= \small{\left[
			\begin{array}{cccc|cccc}
				. & 1 & . & . & . & . & . & . \\
				. & . & -bc^{-1} & . & -ac^{-1} \sigma_1'' & -bc^{-2}\sigma_1' & -a c^{-2} \sigma_1'' & c^{-2} \sigma_1'' \\
				. & . & \sigma_1 & . & . & . & . & . \\
				. & . & . & . & \sigma_1' & . & . & . \\ \hline
				. & . & . & . & . & 1 & . & . \\
				. & . & . & c\sigma_1 & b & a & . & . \\
				. & . & . & . & . & . & \sigma & . \\
				c^3\sigma_1^2 & bc^2\sigma_1^2 & bc^2+b^2c\sigma_1 & bc\sigma_1^2 & -a^2c+b^2\sigma_1 & ac & -a^2 + a\sigma_1 & a \\
			\end{array}
			\right]}
	\end{align*}
	where $\sigma_1'' = \sigma_1^{-2}$. We recall that $\sigma' = \sigma^{-1} =  -bc^{-1} - ac^{-1}\sigma - c^{-1}\sigma^2$ and, hence, $\sigma'' =  -bc^{-1}\sigma^{-1} - ac^{-1} - c^{-1}\sigma_1= -bc^{-1} (-bc^{-1} - ac^{-1}\sigma - c^{-1}\sigma^2) - ac' - c'\sigma_1$. Therefore, the entries of the matrices $\rho(\sigma_1)$ and $\rho(\sigma_2)$ involve only positive powers of $\sigma_1$.

	In the following example, one can see how we can use $\rho$ in order to express every element of the Hecke algebra as an $R$-linear combination of elements in the basis $\{b_j\;:\; j = 1,\dots, 24\} = \{x_i,\; \sigma_1 x_i,\; \sigma_1^2 x_i \;:\; i = 1,\dots, 8\}$.
	$$\begin{array}{lcl}
		\sigma_1^2 \sigma_2^2
		&=& (1_{H'}, 0,0,0,0,0,0,0)\cdot\rho(\sigma_1)\cdot \rho(\sigma_1)\cdot\rho(\sigma_2) \cdot\rho(\sigma_2) \smallbreak\smallbreak\smallbreak\\
		&=& (
		0,\, 0,\, - bc^{-1} \sigma_1^2,\, 0,\,  -ac^{-1},\, - bc^{-2} \sigma_1,\, - ac^{-2},\, c^{-2})\smallbreak \smallbreak\smallbreak\\
		&=&
		0 x_1 + 0 x_2 - bc^{-1} \sigma_1^2 x_3 +  0 x_4  -ac^{-1} x_5 - bc^{-2} \sigma_1 x_6 - ac^{-2} x_7 + c^{-2} x_8\smallbreak\smallbreak\smallbreak\\
		&=&
		- bc^{-1} b_9   -ac^{-1} b_{13} - bc^{-2} b_{17} - ac^{-2} b_{19} + c^{-2} b_{22}.
	\end{array}$$
	\qed
\end{ex}

\section{Center}
\label{sec:center}

Let $W$ be a complex reflection group and let $H$ be the associated Hecke algebra, defined over the Laurent polynomial ring $R=\mathbb{Z}[u_{s,j},\;u_{s,j}^{-1}]$. We recall that  $F$ denotes  the splitting  field $\mathbb{C}(v_{s,j})$, with the indeterminates $v_{s,j}$ as defined in \eqref{split}. We also recall that $FH$ denotes the algebra $H\otimes_{R}F$.

As we have mentioned in section \ref{table}, we have the validity of the BMR freeness conjecture \ref{BMR free}. As a result, we can fix
a basis $\mathcal{B}=\{b_w : w \in W\}$ of $H$, indexed by the elements of $W$.
We now assume also the validity of the BMM symmetrising trace conjecture \ref{BMM sym}, meaning that $H$ admits a unique symmetrising trace $\tau$. We denote by
$\mathcal{B^{\vee}}=\{b_w^{\vee} : w \in W\}$ the dual basis of $\mathcal{B}$ with respect to $\tau$, uniquely determined by the condition $\tau(b_{w_1}b_{w_2}^{\vee})=\delta_{{w_1},{w_2}}$ for all $w_1,w_2\in W$.

We denote now by $\Cl(W)$ the set of conjugacy classes of elements of $W$.  For each class $C \in Cl(W)$, we choose a representative  $w_C \in C$. The square matrix $\bigl(\chi(b_w)\bigr)_{\chi, C}$
of character values is invertible as it specializes to the character
table of $W$.  Hence, for each $w \in W$, there are uniquely determined
coefficients $f_{w,C}\in F$ by the condition
\begin{align*}
	\chi(b_w) = \sum_{C \in \Cl(W)} f_{w, C}\, \chi(b_{w_C})
\end{align*}
for all $\chi \in \Irr(FH)$. These coefficients depend on the choice of the elements $w_C$ and of the basis $\mathcal{B}$.

\begin{rem}\label{cc} Let $W$ be a real reflection group. Then, we choose $\mathcal{B}$ to be the standard basis $\{T_w:w\in W\}$ and $w_C$  an element of minimal length in $C$. We know
	\cite[\S 8.2.2 and \S 8.2.3]{GP} that the coefficients $f_{w, C}$ are  independent of the actual choice
	of the elements $w_C$ and they belong to $R$. In this case, $f_{w, C}$ are known as \emph{class polynomials}.
\end{rem}
The following theorem has been shown by Geck and Rouquier in the real case (see \cite[\S 5.1]{GeckRouquier97} or \cite[Theorem 8.2.3 and Corollary 8.2.4]{GP}). However, the proof in general for every complex reflection group is slightly different from the original, as the prior existence
of class polynomials in the complex case cannot be assumed.

\begin{Theorem}\label{T1} Let $W$ be a complex reflection group. The elements
	\[
	y_C = \sum_{w \in W} f_{w, C}\, b_w^{\vee}, \quad C \in \Cl(W),
	\]
	form a basis of the center $Z(FH)$.
\end{Theorem}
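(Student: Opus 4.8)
\emph{Proof proposal.} The plan is to produce, inside the split semisimple algebra $FH$, an explicit basis of $Z(FH)$ attached to the irreducible characters, and then to recognise the elements $y_C$ as the image of this basis under the invertible change of coordinates dictated by the very equations defining the $f_{w,C}$. I begin with the dimension count: since $FH$ is split semisimple, $\dim_F Z(FH)$ equals the number $|\Irr(FH)|$ of its irreducible characters, and by Tits' deformation theorem $|\Irr(FH)| = |\Irr(W)| = |\Cl(W)|$. Hence it suffices to exhibit $|\Cl(W)|$ elements $y_C$ that lie in $Z(FH)$ and are linearly independent; since the count is exactly $\dim_F Z(FH)$, either spanning or independence will do.

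For each $\chi \in \Irr(FH)$ set $\Omega_\chi := \sum_{w \in W} \chi(b_w)\, b_w^\vee$. Because $\{b_w\}$ and $\{b_w^\vee\}$ are dual with respect to $\tau$, one checks on the basis that $\tau(x\,\Omega_\chi) = \chi(x)$ for every $x \in FH$. From this and the fact that $\tau$ is a symmetric trace, $\Omega_\chi$ is central: for any $h \in FH$ and any $x \in FH$,
\[
\tau\bigl((h\Omega_\chi - \Omega_\chi h)\,x\bigr) = \tau(\Omega_\chi x h) - \tau(\Omega_\chi h x) = \chi(xh) - \chi(hx) = 0,
\]
so non-degeneracy of $\tau$ forces $h\Omega_\chi = \Omega_\chi h$. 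Moreover the map $z \mapsto \bigl(x \mapsto \tau(zx)\bigr)$ is injective, and the irreducible characters of the split semisimple algebra $FH$ are linearly independent; therefore the $\Omega_\chi$ are linearly independent, and being $|\Cl(W)|$ in number they form a basis of $Z(FH)$.

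Finally, substituting the defining relation $\chi(b_w) = \sum_C f_{w,C}\,\chi(b_{w_C})$ into $\Omega_\chi$ and interchanging the summations gives
\[
\Omega_\chi = \sum_{w \in W}\Bigl(\sum_{C} f_{w,C}\,\chi(b_{w_C})\Bigr)\, b_w^\vee = \sum_{C} \chi(b_{w_C})\,\Bigl(\sum_{w \in W} f_{w,C}\, b_w^\vee\Bigr) = \sum_{C} \chi(b_{w_C})\, y_C .
\]
Thus the square matrix $\bigl(\chi(b_{w_C})\bigr)_{\chi, C}$ expresses the basis $(\Omega_\chi)_\chi$ of $Z(FH)$ in terms of the family $(y_C)_C$. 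As recalled just before the theorem, this matrix is invertible, since it specialises to the character table of $W$ under $v_{s,j} \mapsto 1$. Consequently the $y_C$ span, hence form a basis of, $Z(FH)$; in particular each $y_C$ is automatically central, and the proof is complete.

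The point where this argument must depart from the original proof of Geck and Rouquier is that in the complex case we cannot invoke class polynomials and their good properties (Remark~\ref{cc}); here the $f_{w,C}$ are nothing more than the unique solution over $F$ of the linear system relating the column $(\chi(b_w))_\chi$ to the columns $(\chi(b_{w_C}))_\chi$. Accordingly the whole proof is arranged to use only two ingredients available for every complex reflection group: the split semisimplicity of $FH$ (for the dimension count, for the centrality-via-trace argument, and for linear independence of characters) and the invertibility of the character-value matrix. The only genuinely delicate bookkeeping is that the relation $\tau(b_{w'} b_w^\vee) = \delta_{w',w}$ gets used in the form $\tau(b_{w'}\Omega_\chi) = \chi(b_{w'})$, i.e.\ with the dual basis on the left, which is legitimate precisely because $\tau$ is symmetric.
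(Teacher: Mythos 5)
Your proof is correct. It reaches the same destination as the paper's but is organised dually: the paper works on the side of linear functionals, showing that each $f_C\colon b_w\mapsto f_{w,C}$ is a trace function, that $\{f_C\}_C$ is a basis of the space of trace functions (spanning from $\chi=\sum_C\chi(b_{w_C})f_C$ and invertibility of $(\chi(b_{w_C}))_{\chi,C}$, independence from $f_C(b_{w_{C'}})=\delta_{C,C'}$), and then invokes \cite[Lemma 7.1.7]{GP} to transport this basis to a basis $\{f_C^*\}=\{y_C\}$ of $Z(FH)$. You instead work directly with the elements $\Omega_\chi=\chi^*=\sum_w\chi(b_w)b_w^\vee$, prove their centrality by hand via the non-degeneracy and symmetry of $\tau$ (in effect reproving the relevant half of the cited lemma for the special trace functions $\chi$), get a basis of $Z(FH)$ from a dimension count $\dim_F Z(FH)=|\Irr(FH)|=|\Cl(W)|$ that the paper never needs, and then recover the $y_C$ by inverting the same character-value matrix. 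The identity $\Omega_\chi=\sum_C\chi(b_{w_C})\,y_C$ you derive is exactly the inverse of the paper's relation $y_C=f_C^*=\sum_\chi i(C,\chi)\,\chi^*$, so the two arguments are transposes of one another. What your version buys is self-containedness (no appeal to the duality lemma) and an explicit exhibition of the central elements $\chi^*$; what the paper's version buys is brevity and the statement, useful elsewhere, that $\{f_C\}_C$ is a basis of the space of trace functions. Both hinge on the same two inputs: split semisimplicity of $FH$ and invertibility of $(\chi(b_{w_C}))_{\chi,C}$ via specialisation to the character table of $W$.
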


\begin{proof}
	For each class $C\in\Cl(W)$, we define the function $f_C \colon H \to F,\, b_w \mapsto f_{w, C}$. We first prove that $f_C$ is a trace function.
	For each $b_w\in \mathcal{B}$ and for each $\chi \in \Irr(FH)$ we have:
	\begin{align*}
		\chi(b_w)
		= \sum_{C\in\Cl(W) } f_{w, C}\, \chi(b_{w_C})
		= \sum_{C\in\Cl(W)} f_C(b_w)\, \chi(b_{w_C})
		= \sum_{C\in\Cl(W)} \chi(b_{w_C})\, f_C(b_w)\text.
	\end{align*}
	Therefore, \begin{equation} \label{eq1}
		\chi = \sum_{C\in\Cl(W)} \chi(b_{w_C}) f_C,\; \text{ for all } \chi \in \Irr(FH).
	\end{equation}
	The matrix $(\chi(b_{w_C}))_{\chi,C}$ specializes to the character table of $W$, which
	has a nonzero determinant and, hence, it is invertible. We denote its inverse by
	$(i(C, \chi))_{\chi,C}$. Therefore, it follows from (\ref{eq1}) that  \begin{align*}
		f_C = \sum_{\chi \in \Irr(FH)} i(C, \chi)\, \chi,
	\end{align*}
	which proves that $f_C$ is a trace function.
	
	We now prove that the set $\{f_C\}_C$ is a basis of the space of trace functions on $FH$. Since the algebra $FH$ is split semisimple, we know \cite[Exercise 7.4(b)]{GP} that the set $\Irr(FH)$ is
	a basis for the space of trace functions on $FH$. Therefore, from Equation (\ref{eq1}) we conclude that $\{f_C\}_C$ is a spanning set of this space. We now prove that the set $\{f_C\}_C$ is a linearly independent set of trace functions. For this purpose, it suffices to prove that $f_C(b_{w_{C'}}) = \delta_{C,C'}$. By definition, $f_C(b_{w_{C'}})=f_{w,C'}$ and $f_{w,C'}$ is the coefficient of the column of the character value $\chi(b_{w_{C'}})$,
	when the column $\chi(b_w)$ is expressed as a linear combination of the character values $\chi(b_{w_C})$. We have
	\begin{align*}
		\chi(b_{w_C}) = 1\cdot\chi(b_{w_C}) + 0,
	\end{align*}
	therefore $f_{w,C'}=0$, for all $C'\not=C$ and $f_{w,C} = 1$. We conclude that the set $\{f_C\}_C$ is a basis of the space of trace functions on $FH$.
	
	We now prove that the set $\{y_C\}_C$ is a basis of the center $Z(FH)$. Since we assume that the algebra $FH$ admits a symmetrising trace, we can apply \cite[Lemma 7.1.7]{GP}, which states that the set $\{f_C^*\}_C$ is a basis of $Z(FH)$,  where $f_C^*$ denotes the dual of $f_C \in \Hom_{F}(FH, F)$. We have:
	\begin{align*}
		f_C^* = \sum_w f_C(b_w) b_w^{\vee} = \sum_w f_{w,C} b_w^{\vee} = y_C
	\end{align*}
	Therefore,  the set $\{y_C\}_C$ is a basis of the center $Z(FH)$.
\end{proof}

We now prove a \emph{dual} version of Theorem \ref{T1}.  For each class $C \in Cl(W)$, we choose again a representative  $w_C \in C$ and we define
coefficients $g_{w,C} \in F$ by the condition
\begin{align*}
	\chi(b_w^{\vee}) = \sum_{C \in \Cl(W)} g_{w, C}\, \chi(b_{w_C}^{\vee}),\;\text{ for all } \chi \in \Irr(FH).
\end{align*}

\begin{Theorem}\label{T2}
	The elements
	\[
	z_C = \sum_{w \in W} g_{w, C}\, b_w, \quad C \in \Cl(W),
	\]
	form a basis of the center $Z(FH)$.
\end{Theorem}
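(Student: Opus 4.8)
The plan is to derive Theorem~\ref{T2} by applying Theorem~\ref{T1} to the basis $\mathcal{B}^{\vee} = \{b_w^{\vee} : w \in W\}$ of $FH$ in place of $\mathcal{B}$ --- equivalently, to transcribe the proof of Theorem~\ref{T1} with the roles of $\mathcal{B}$ and $\mathcal{B}^{\vee}$ interchanged. The key structural point is that $\mathcal{B}^{\vee}$ is again an $F$-basis of $FH$ and that its dual basis with respect to $\tau$ is $\mathcal{B}$ itself: since $\tau$ is symmetric, $\tau(b_{w_1}^{\vee} b_{w_2}) = \tau(b_{w_2} b_{w_1}^{\vee}) = \delta_{w_1, w_2}$, so $(b_w^{\vee})^{\vee} = b_w$. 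Thus the triple $(\mathcal{B}^{\vee}, \tau, \{w_C\})$ has exactly the shape of the data feeding Theorem~\ref{T1}, with the coefficients $g_{w,C}$ playing the role of the $f_{w,C}$ there, and the only ingredient of that proof needing fresh verification is that the relevant class-value matrix $\bigl(\chi(b_{w_C}^{\vee})\bigr)_{\chi,C}$ is invertible (so that the $g_{w,C}$ are well defined in the first place). This last point is where I expect the main obstacle to lie.

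To settle it, I would argue by specialization. Under $v_{s,j} \mapsto 1$ the algebra $FH$ degenerates to $K[W]$, the trace $\tau$ to the canonical symmetrising trace $\tau_W$, and (as in the proof of Theorem~\ref{T1}) $b_w$ to $w$. Since duality of bases is governed by the Gram matrix of $\tau$, whose determinant is a unit and hence remains invertible after specialization, $b_w^{\vee}$ degenerates to the dual of $w$ with respect to $\tau_W$; a one-line computation (writing $w^{\vee} = \sum_v c_v v$ and using $\tau_W(u w^{\vee}) = c_{u^{-1}}$) identifies this dual as $w^{-1}$. Therefore $\bigl(\chi(b_{w_C}^{\vee})\bigr)_{\chi,C}$ specializes to $\bigl(\chi(w_C^{-1})\bigr)_{\chi,C}$. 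Because $C \mapsto C^{-1}$ is an involution, hence a permutation, of $\Cl(W)$, this is the character table of $W$ with its columns permuted, in particular invertible; so $\bigl(\chi(b_{w_C}^{\vee})\bigr)_{\chi,C}$ is invertible as well.

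With this in hand I would repeat the three steps of the proof of Theorem~\ref{T1} line by line. Define $g_C \colon H \to F$ on the basis $\mathcal{B}^{\vee}$ by $g_C(b_w^{\vee}) = g_{w,C}$. The defining relations give $\chi = \sum_{C} \chi(b_{w_C}^{\vee})\, g_C$ for all $\chi \in \Irr(FH)$; inverting the (now known invertible) matrix $\bigl(\chi(b_{w_C}^{\vee})\bigr)$ writes each $g_C$ as an $F$-combination of irreducible characters, so each $g_C$ is a trace function and the $g_C$ span the space of trace functions on $FH$ since $\Irr(FH)$ does. The identity $g_C(b_{w_{C'}}^{\vee}) = \delta_{C,C'}$, read off from the definition, makes them linearly independent, so $\{g_C\}_C$ is a basis of that space. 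By \cite[Lemma 7.1.7]{GP} the duals $g_C^{*} \in Z(FH)$, characterised by $\tau(g_C^{*}\, h) = g_C(h)$ for all $h$, then form a basis of the center. Finally, for any basis $\{c_w\}$ of $FH$ with $\tau$-dual basis $\{c_w^{\vee}\}$ one has $g_C^{*} = \sum_w g_C(c_w)\, c_w^{\vee}$ (both sides have the same $\tau$-pairing with every $c_u$); taking $\{c_w\} = \mathcal{B}^{\vee}$ and $\{c_w^{\vee}\} = \mathcal{B}$ yields $g_C^{*} = \sum_w g_C(b_w^{\vee})\, b_w = \sum_w g_{w,C}\, b_w = z_C$, and hence $\{z_C\}_{C}$ is a basis of $Z(FH)$.
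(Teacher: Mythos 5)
Your proof is correct and follows essentially the same route as the paper's: define $g_C$ on $\mathcal{B}^{\vee}$, show it is a trace function via the invertibility of $\bigl(\chi(b_{w_C}^{\vee})\bigr)_{\chi,C}$, deduce that $\{g_C\}_C$ is a basis of the space of trace functions, and apply \cite[Lemma 7.1.7]{GP} together with the identity $(b_w^{\vee})^{\vee}=b_w$ to get $g_C^{*}=z_C$. If anything, you are more careful on the one delicate point: the paper merely asserts that $\bigl(\chi(b_{w_C}^{\vee})\bigr)$ ``specializes to the character table of $W$'', whereas your specialization argument (identifying the limit of $b_{w_C}^{\vee}$ as $w_C^{-1}$, so that the matrix degenerates to a column permutation of the character table) actually justifies the invertibility.
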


\begin{proof}
	For each class $C\in\Cl(W)$, we define the function $g_C \colon H \to F,\, b_w^{\vee} \mapsto g_{w, C}$. As in the proof of Theorem \ref{T1},
	we prove that $\{g_C\}_{C}$ is a basis of the space of trace functions on $FH$.
	For each $b_w\in \mathcal{B}$ and for each $\chi \in \Irr(FH)$ we have:
	\begin{align*}
		\chi(b_w^{\vee})
		= \sum_{C \in \Cl(W)} g_{w, C}\, \chi(b_{w_C}^{\vee})
		= \sum_C g_C(b_w^{\vee})\, \chi(b_{w_C}^{\vee})
		= \sum_C \chi(b_{w_C}^{\vee})\, g_C(b_w^{\vee})
	\end{align*}
	Therefore, \begin{equation} \label{eq2}
		\chi = \sum_{C\in\Cl(W)}\chi(b_{w_C}^{\vee}) g_C,\; \text{ for all } \chi \in \Irr(FH).
	\end{equation}
	The matrix $(\chi(b_{w_C}^{\vee}))_{\chi,C}$ is invertible, since
	it specializes to the character table of $W$, which
	has a nonzero determinant. We denote its inverse by
	$(j(C, \chi))_{\chi,C}$ and Equation (\ref{eq2}) becomes:
	\begin{align*}
		g_C = \sum_{\chi\in \Irr(FH)} j(C, \chi)\, \chi
	\end{align*}
	and, hence, $g_C$ is a trace function.
	Using the same arguments again as in the  proof of Theorem \ref{T1}, we have $g_C(b_{C'}^{\vee}) = \delta_{C,C'}$ and, hence, together with Equation (\ref{eq2}) we conclude that  the set $\{g_C\}_C$ is a basis of the space of trace functions on $FH$.
	
	We now prove that the set $\{z_C\}_C$ is a basis of the center $Z(FH)$. Since we assume that the algebra $FH$ admits a symmetrising trace, we can apply \cite[Lemma 7.1.7]{GP}, which states that the set $\{g_C^*\}_C$ is a basis of $Z(FH)$,  where $g_C^*$ denotes the dual of $g_C \in \Hom_{F}(FH, F)$. We have:
	\begin{align*}
		g_C^* = \sum_w g_C(b_w^{\vee}) (b_w^{\vee})^{\vee} = \sum_w g_{w,C} b_w = z_C
	\end{align*}
	
	Therefore,  the set $\{z_C\}_C$ is a basis of the center $Z(FH)$.
\end{proof}

Note that the elements $z_C$ depend on the choice of the basis $\mathcal{B}$ and of the class representatives $w_C$,
$C \in \Cl(W)$.

We focus now on the exceptional groups we have described in section \ref{table}, namely the groups  $G_n$, where $n\in\{4,\dots,8,12,13,22\}$.
In section \ref{table} we have made a particular choice of a basis $\mathcal{B}$ and we managed with the help of the coset table to express each element of the Hecke algebra as linear combination of elements in $\mathcal{B}$. In particular, we can express any product $bb'$, with $b,b'\in \mathcal{B}$ as a linear combination of the elements in  $\mathcal{B}$. We use this linear combination in order to give another proof the BMM symmetrising trace conjecture \ref{BMM sym} (this conjecture is known to hold for these complex reflection groups, as we have mentioned in section \ref{Hecke}).

We define a linear map $\tau\colon H  \to R$ by setting $\tau(\sum_{b\in \mathcal{B}} \alpha_b b) = \alpha_1$. We can now calculate the Gram matrix $A=(\tau(bb'))_{b,b'\in \mathcal{B}}$ and prove the following:

\begin{Proposition}\label{tracef} Let $W$ be one of the groups $G_n$,  where $n \in \{4,\dots,8,12,13,22\}$. With the above choice of $\mathcal{B}$, we have:
	\begin{enumerate}[leftmargin=*]
		\item[(i)] 	The matrix $A$ is symmetric and its determinant is a unit in $R$.
		\item[(ii)] Condition \eqref{extra2} is satisfied.
	\end{enumerate}
\end{Proposition}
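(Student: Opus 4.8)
The plan is to reduce both parts of Proposition~\ref{tracef} to a finite, explicit computation carried out in each of the eight algebras, using the faithful matrix model $\rho$ over $H'$ constructed in Section~\ref{table} (as illustrated for $G_4$ in Example~\ref{exa}). First I would record that, by the construction of the coset table, every product $b_j b_k$ of basis elements of $\mathcal{B}$ can be rewritten effectively as an explicit $R$-linear combination $\sum_i \alpha_i b_i$: one evaluates the word $b_j b_k$ as the row vector $1_H \cdot \rho(b_j)\rho(b_k)$ with entries in $H'$, then uses the $R$-basis $\{1,\sigma_0,\dots,\sigma_0^{e_{s_0}-1}\}$ of $H'$ to read off the coefficients. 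With this in hand, $\tau(b_j b_k) = \alpha_1$ is the coefficient of $b_1 = 1$, and the Gram matrix $A = (\tau(b_j b_k))_{j,k}$ is a concrete matrix over $R$ that can be written down and whose determinant can be computed directly in GAP.

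For part~(i), symmetry of $A$ is equivalent to $\tau(b_j b_k) = \tau(b_k b_j)$ for all $j,k$, which is exactly the trace property $\tau(h_1 h_2) = \tau(h_2 h_1)$; so I would first establish that the linear map $\tau$ defined by $\tau(\sum_b \alpha_b b) = \alpha_1$ is a trace. One clean way: since $\rho$ is a faithful representation of $H$ on the free $H'$-module with basis $\{x_i\}$, and $H'$ itself carries the obvious $R$-linear functional $t$ picking out the coefficient of $\sigma_0^0$, one checks that $\tau(h)$ equals $t$ applied to the $(1,1)$-entry of $\rho(h)$ (this is visible from $1_H \cdot \rho(h)$ having first coordinate the $x_1$-component of $h$), and then traciality of $\tau$ follows from traciality of the composite $t\circ(\text{matrix }(1,1)\text{-entry})$ restricted to the image of $\rho$ — or, more robustly, one simply verifies $\tau(b_j b_k) = \tau(b_k b_j)$ on all pairs by the explicit computation above and observes this forces $\tau(h_1 h_2)=\tau(h_2 h_1)$ for all $h_1,h_2$ by bilinearity. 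Once $A$ is symmetric, non-degeneracy is the statement $\det A \in R^\times$, i.e. $\det A$ is a monomial $\pm u_{s,1}^{a_1}\cdots$ in the parameters; this is checked by direct symbolic computation of $\det A$ for each of the eight groups.

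For part~(ii), I would unwind Condition~\eqref{extra2}: one needs $\tau(T_{x^{-1}\boldsymbol{\pi}}) = 0$ for every $x$ in the braid-group basis $\mathcal{B}\setminus\{1\}$, where $\boldsymbol{\pi} = z^{|Z(W)|}$. Using the matrix model, $T_{x^{-1}\boldsymbol{\pi}}$ is computed as $1_H\cdot\rho(x)^{-1}\rho(\boldsymbol{\pi})$ — note $\rho(\boldsymbol{\pi})$ is a scalar matrix since $\boldsymbol{\pi}$ is central, and the $\rho(x)^{-1}$ is available because each $\rho(\sigma_i)$ is invertible over $H'$ by the inverse Hecke relations — and then $\tau$ of the result is again the coefficient of $b_1$. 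So this too reduces to finitely many explicit evaluations, one per nonidentity basis element, for each group. Since these $\mathcal{B}$ lie inside $B$ by construction (they are words in the $\sigma_i$, hence images of braid elements, and $1\in\mathcal{B}$), Malle--Michel's equivalence~\eqref{extra2}$\Leftrightarrow$\eqref{extra} applies, so verifying \eqref{extra2} gives the full BMM symmetrising trace conjecture for these eight algebras.

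The main obstacle I anticipate is not conceptual but computational: the coefficient blow-up in $R$ when reducing long words $x^{-1}\boldsymbol{\pi}$ and products $b_jb_k$ via repeated application of the positive/inverse Hecke relations, especially for the larger groups $G_{13}$ and $G_{22}$ where $|W/W'|$ and $e_{s_0}$ are bigger. The matrix model is exactly what tames this — multiplying fixed matrices over the small ring $H'$ is far cheaper than rewriting in the presentation — but one still must trust (and, in the paper, has verified via GAP) that the entries of $\rho(\sigma_i)^{\pm1}$ stay within $H'$ written in positive powers of $\sigma_0$, as checked in Example~\ref{exa}. A secondary subtlety is confirming that $\rho(\boldsymbol{\pi})$ really is the expected scalar and identifying that scalar correctly in terms of $a_{s,0}$-type parameters; this is where a sign or exponent error would propagate, so I would cross-check the value of $\det A$ against the known formula $\det A = \pm\prod_s a_{s,0}^{m_s}$-type expression predicted by the symmetrising-trace theory as a consistency test.
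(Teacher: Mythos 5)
Your proposal takes essentially the same route as the paper: Proposition~\ref{tracef} is established by explicit finite computation (in GAP) of the Gram matrix $A$ and of the values $\tau(T_{x^{-1}\boldsymbol{\pi}})$, using the coset-table matrix model $\rho$ to reduce all products to $R$-linear combinations of $\mathcal{B}$, with the resulting determinants recorded on the project's webpage. One small correction that does not affect your argument: $\rho(\boldsymbol{\pi})$ is \emph{not} a scalar matrix over $H'$ --- centrality of $\boldsymbol{\pi}$ in $H$ only makes $\rho(\boldsymbol{\pi})$ commute with the image of $\rho$, and indeed for $G_4$ the central element $z$ sends $x_1\mapsto x_5$, so its matrix is block-permutation-like rather than scalar; one simply evaluates $1_H\cdot\rho(x)^{-1}\rho(\boldsymbol{\pi})$ as you describe without that simplification.
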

We note here that in the project's webpage \cite{pr} we give explicitly for each group the determinant of the Gram matrix $A$.

Our goal is to find the coefficients $g_{w,C}$. By their definition, we need to find the dual basis $\mathcal{B}^{\vee}$. More precisely, we need to express each $b_i^{\vee}\in \mathcal{B}^{\vee}$ as $R$-linear combination in elements of $\mathcal{B}$. Let
$b_i^{\vee}=a^i_1b_1+a^i_2b_2+\dots+a^i_{|W|}b_{|W|}$ be this linear combination. Then, we have the following:
$$\begin{pmatrix}
	\tau(b_i^{\vee}b_1)\smallbreak\smallbreak\smallbreak\\
	\tau(b_i^{\vee}b_2)\smallbreak\smallbreak\\
	\vdots\smallbreak\smallbreak\\
	\tau(b_i^{\vee}b_{|W|})\\
\end{pmatrix}
=A\begin{pmatrix}
	a_1^i\smallbreak\smallbreak\smallbreak\\
	a_2^i\smallbreak\smallbreak\\
	\vdots\smallbreak\smallbreak\\
	a_{|W|}^i\\
\end{pmatrix}
$$
By Proposition \ref{tracef}(i) $A$ is invertible in $R$. Moreover, we know that  the dual basis is determined by the condition  $\tau(b_k^{\vee}b_j) = \delta_{kj}$. Therefore:
$$\begin{pmatrix}
	a_1^i\smallbreak\smallbreak\smallbreak\\
	a_2^i\smallbreak\\
	\vdots\smallbreak\\
	a_{|W|}^i\\
\end{pmatrix}=A^{-1}\;\;
\begin{blockarray}{(>{\:}c<{\:})l}
	0&\\
	\vdots&\\
	1&\,\,i-\text{place}\\
	\vdots&\\
	0&\\
\end{blockarray} = i \text{-th column of } A^{-1}$$
Having now  expressed each $b_i^{\vee}\in \mathcal{B}^{\vee}$ as $R$-linear combination in elements of $\mathcal{B}$, it remains to make a choice of class representatives $w_C$, which will allow us to calculate the coefficients $g_{w,C}$. We give for each group this particular choice of representatives in the webpage of this project \cite{pr}. The following result is the main theorem of this paper.

\begin{Theorem}Let $W$ be the exceptional group $G_n$, where $n\in\{4,\dots, 8, 12, 13, 22\}$. There exists a choice of a basis $\{b_w : w \in W\}$ of the Hecke algebra
	$H$ of $W$ and a choice of conjugacy class representatives $\{w_C : C \in \Cl(W)\}$, such that the  coefficients
	$g_{w, C}$ belong to $R$ and, hence, the set $\{z_C : C \in \Cl(W)\}$ is a basis of $Z(H)$.
\end{Theorem}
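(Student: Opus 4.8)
The plan is to establish the theorem by explicit computation for each of the eight groups, leveraging the machinery built in Sections~\ref{table} and~\ref{sec:center}, and there is little hope (nor need) for a uniform case-free argument here, since the statement is about a delicate integrality phenomenon that depends on good choices. First I would fix, for each group $G_n$ with $n \in \{4,\dots,8,12,13,22\}$, the basis $\mathcal{B} = \{b_w : w \in W\}$ produced by the coset table construction of Section~\ref{table}, so that every product $bb'$ with $b,b' \in \mathcal{B}$ is available as an explicit $R$-linear combination of $\mathcal{B}$ via the matrix representation $\rho$. From Proposition~\ref{tracef}(i) the Gram matrix $A = (\tau(bb'))_{b,b'\in\mathcal{B}}$ is invertible over $R$, so the dual basis $\mathcal{B}^\vee$ can be written over $R$ in terms of $\mathcal{B}$: the coordinate vector of $b_i^\vee$ is the $i$-th column of $A^{-1}$. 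This already lives in $R$, which is the first integrality input.

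Next I would evaluate the irreducible characters $\chi \in \Irr(FH)$ on the dual basis elements $b_w^\vee$, using the explicit matrix models of Malle--Michel available in CHEVIE~\cite{chevie-jm}: since each $b_w^\vee$ is an $R$-linear combination of $\mathcal{B}$, and each $b \in \mathcal{B}$ is a word in the generators $\sigma_i$, the values $\chi(b_w^\vee)$ are obtained by matrix multiplication. Then, for a chosen set of class representatives $\{w_C : C \in \Cl(W)\}$, the matrix $\bigl(\chi(b_{w_C}^\vee)\bigr)_{\chi,C}$ is invertible over $F$ (it specializes to the character table of $W$), and the coefficients $g_{w,C}$ are recovered by solving the defining linear system, equivalently by inverting that matrix. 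At this point $g_{w,C} \in F$ by construction, and Theorem~\ref{T2} already guarantees that $\{z_C\}$ is an $F$-basis of $Z(FH)$; what remains is to exhibit choices that force $g_{w,C} \in R$ for all $w$ and $C$.

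The decisive and most delicate step is the choice of the conjugacy class representatives $w_C$ (and, to a lesser extent, the ordering/normalization of $\mathcal{B}$ inherited from the anchor coset representatives $w^k$). As the discussion of $G_{12}$ in~\cite{pr} shows, a naive choice of representatives yields coefficients $g_{w,C}$ with genuine denominators, so one cannot simply compute and be done; the representatives must be selected — guided by experimentation, and typically by taking powers of the central element and ``balanced'' words analogous to the minimal/maximal length representatives of the real case — so that the matrix $\bigl(\chi(b_{w_C}^\vee)\bigr)_{\chi,C}$ specializes to a character-table-like matrix whose inverse has entries without new poles in the $u_{s,j}$. I expect this to be the main obstacle: there is no algorithm, and verifying that a candidate choice works requires computing the full inverse over $\mathbb{Z}[u_{s,j}^\pm]$ and checking that no negative powers beyond those already in $R$ appear, which is where coefficient explosion bites. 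Once such a choice is found for each group, substituting the resulting $g_{w,C} \in R$ into $z_C = \sum_w g_{w,C}\, b_w$ produces elements of $H$ (not merely $FH$) which, by Theorem~\ref{T2}, span $Z(FH)$ over $F$; since each $z_C$ lies in $H$ and they are $F$-linearly independent, and $Z(H) = Z(FH) \cap H$ is a free $R$-module of rank $|\Cl(W)|$ sitting inside $Z(FH)$, a final rank count (the $|\Cl(W)|$ elements $z_C$ are $R$-linearly independent and their $F$-span is all of $Z(FH)$, hence their $R$-span is a full-rank submodule of $Z(H)$) together with the verification — again computational, using the coset table to multiply $\sigma_i z_C$ and $z_C \sigma_i$ — that each $z_C$ is actually central and that the $z_C$ generate $Z(H)$ as an $R$-module, completes the proof. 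The supporting GAP programs on the project webpage~\cite{pr} carry out these verifications for all eight groups.
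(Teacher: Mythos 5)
Your proposal follows essentially the same route as the paper, whose ``proof'' of this theorem consists precisely of the computations you describe: take the coset-table basis of Section~\ref{table}, use Proposition~\ref{tracef}(i) to express the dual basis over $R$ as the columns of $A^{-1}$, evaluate the Malle--Michel character models on the $b_w^{\vee}$, and check for an explicitly chosen (experimentally found) set of class representatives that all $g_{w,C}$ land in $R$. The one place where your write-up wobbles is the final deduction: as you correctly sense, knowing that the $z_C$ lie in $H$ and form an $F$-basis of $Z(FH)$ only shows that their $R$-span is a full-rank $R$-submodule of $Z(H)$, and your proposed remedy --- ``computationally verify that the $z_C$ generate $Z(H)$ as an $R$-module'' --- is circular, since you have no independent description of $Z(H)$ to compare against (also, centrality of each $z_C$ needs no separate check, being already guaranteed by Theorem~\ref{T2}). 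The clean way to close the paper's ``hence'' is the duality built into Theorem~\ref{T2}: since $z_C=g_C^{*}$ one has $\tau(z_C\, b_{w_{C'}}^{\vee})=g_C(b_{w_{C'}}^{\vee})=\delta_{C,C'}$, so any $z\in Z(H)=Z(FH)\cap H$, written over $F$ as $z=\sum_C\lambda_C z_C$, has coordinates $\lambda_{C'}=\tau(z\,b_{w_{C'}}^{\vee})$, and these lie in $R$ because $z$ and $b_{w_{C'}}^{\vee}$ both belong to $H$ (the latter since $A^{-1}$ has entries in $R$) and $\tau$ takes values in $R$. With that one observation added, your argument is complete and coincides with the paper's.
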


For a better understanding of the above theorem, we revisit the example of $G_4$.
\begin{ex}
	Let $W$ be the complex reflection group $G_4$. In the example \ref{exa} we saw that the associated Hecke algebra $H$ admits the basis $$\{b_{3k+m}\;:\; k = 0,\dots, 7,\,m=1,2,3\},$$ 
	defined as $b_{3k+m}=\sigma_1^{m-1}x_{k+1}$, where $x_1=1,\; x_2=\sigma_2,\; x_3=\sigma_2\sigma_1\sigma_2,\; x_4=\sigma_2\sigma_1\sigma_2\sigma_1$, $x_5=zx_1,\; x_6=zx_2,\; x_7=zx_3,\; x_8=zx_4$. We now make the following choice for the class representatives $w_C$:
	$$w_{C_1}=b_1,\; w_{C_2}=b_{10},\;w_{C_3}=b_{13},\; w_{C_4}=b_{15},\;w_{C_5}=b_{22},\;w_{C_6}=b_{23},\; w_{C_7}=b_{24}.$$
	The evaluation of the irreducible characters of $H$ on the dual basis $\{b_i^{\vee}\}$
	yields the  elements $g_{i,C}$ and hence an explicit
	basis of $Z(FH)$:
	\begin{align*}
		z_1 &= ac^3(b_{3} + b_{5}) + (abc^2+c^3)(b_{6} + b_{7}) + (bc^2+ab^2c-a^2c^2)b_{8}
		+ ac^2(b_{9} + b_{11}) \\& + (abc+c^2)b_{12} + (2bc+ab^2-a^2c)b_{14}
		+ c(b_{18} + b_{19}) + b\,b_{20} + b_{24} \\
		z_2 &= c^3b_{3} + bc^2(b_{6} + b_{7}) + b^2c\,b_{8} + bc\,b_{12} + (-ac+b^2)b_{14} + c\,b_{17} - a\,b_{20} \\& + b_{21} + b_{23} \\
		z_3 &= c(b_{14} + b_{16}) - a\,b_{19} + b_{20} + b_{22} \\
		z_4 &= c^2 b_{5} - ac\,b_{8} + c(b_{9} + b_{11}) -a\,b_{14} + b_{15} \\
		z_5 &= b_{13} \\
		z_6 &= c(b_{2} + b_{4}) -a\,b_{7} + b_{8} + b_{10} \\
		z_7 &= b_{1} \\
	\end{align*}
	As we can see, the elements $g_{i,C}$ are actually elements in $R$ and, hence, the set $\{z_1,\dots, z_7\}$ is indeed a basis of $Z(H)$.
	\qed
\end{ex}

Based on these examples and the fact that complex reflection groups generalize the properties of real reflection groups, we believe that  one can find in this way a basis  $\{z_C : C \in \Cl(W)\}$ of $Z(H)$ for each complex reflection group $W$. Therefore, we state the following conjecture:

\begin{conj}
	Let $W$ be a complex reflection group. There exists a choice of a basis $\{b_w : w \in W\}$ of the Hecke algebra
	$H$ of $W$, and a choice of conjugacy class representatives $\{w_C : C \in \Cl(W)\}$ such that the construction of Theorem \ref{T2} yields polynomial coefficients
	$g_{w, C}\in R$, and hence a basis $\{z_C : C \in \Cl(W)\}$ of $Z(H)$.
\end{conj}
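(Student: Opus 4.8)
The plan is to prove the statement constructively, group by group, by combining the combinatorial machinery of Section~\ref{table} with the two structural inputs already available, Theorem~\ref{T2} and Proposition~\ref{tracef}. For each $n \in \{4,\dots,8,12,13,22\}$ I would start from the basis $\mathcal{B} = \{b_w : w \in W\}$ of $H$ produced by the coset-table construction, together with the representation $\rho$ of $H$ as a free $H'$-module that it supplies (as in Example~\ref{exa}). This is what makes the subsequent computations effective: it turns every product in $H$ into a matrix multiplication, so that any element of $H$ can be written as an explicit $R$-linear combination of $\mathcal{B}$.

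The first block of computations produces the dual basis. Using $\rho$ I would compute the Gram matrix $A = (\tau(b_w b_{w'}))_{w,w'}$ of the candidate symmetrising trace $\tau$; Proposition~\ref{tracef} then certifies that $\tau$ is a symmetrising trace and that $\det A \in R^{\times}$, so $A^{-1}$ has entries in $R$ and its columns express $\mathcal{B}^{\vee} = \{b_w^{\vee}\}$ as $R$-linear combinations of $\mathcal{B}$. In particular $\mathcal{B}^{\vee}$ is again an $R$-basis of $H$, and $\tau$ maps $H\cdot H$ into $R$ --- two facts needed at the end.

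The second block evaluates characters and solves for the $g_{w,C}$. Using the Malle--Michel matrix models (available in CHEVIE) I would evaluate each $\chi \in \Irr(FH)$ on every word $b_w$ to form the matrix $(\chi(b_w))_{\chi,w}$, and then obtain $(\chi(b_w^{\vee}))_{\chi,w} = (\chi(b_w))_{\chi,w}\,A^{-1}$. For a chosen set of class representatives $\{w_C\}$ the matrix $(\chi(b_{w_C}^{\vee}))_{\chi,C}$ is invertible over $F$, since it specialises to the character table of $W$, so inverting it yields the $g_{w,C} \in F$ determined by $\chi(b_w^{\vee}) = \sum_C g_{w,C}\,\chi(b_{w_C}^{\vee})$. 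The crux is then to \emph{choose} $\mathcal{B}$ and the representatives $w_C$ so that all $g_{w,C}$ actually lie in $R$, and this is the step I expect to be the real obstacle: it is not forced by the theory --- the paper records that a naive choice for $G_{12}$ gives coefficients outside $R$ --- and inverting matrices over $F = \mathbb{C}(v_{s,j})$ invites coefficient explosion, so which variables one treats as dependent matters in practice. Experimentation with $\mathcal{B}$ and, above all, with the $w_C$ is what makes it work; once a good choice is fixed, one simply reads off from the explicit output that $g_{w,C} \in R$ for all $w,C$, as in the $G_4$ example.

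Finally I would upgrade the conclusion from $Z(FH)$ to $Z(H)$. With $g_{w,C} \in R$, each $z_C = \sum_w g_{w,C}\,b_w$ lies in $H$, and it lies in $Z(FH)$ by Theorem~\ref{T2}; since any element of $H$ commuting with all of $FH$ commutes with all of $H$, we get $z_C \in Z(FH)\cap H = Z(H)$. Theorem~\ref{T2} gives that $\{z_C\}$ is an $F$-basis of $Z(FH)$; in particular the $z_C$ are $F$-linearly independent in $FH$, hence a fortiori $R$-linearly independent in $H$. For spanning, take $z \in Z(H) \subseteq Z(FH)$ and write $z = \sum_C \lambda_C z_C$ with $\lambda_C \in F$; pairing with $b_{w_{C'}}^{\vee}$ through $\tau$, and using the identity $\tau(z_C\,h) = g_C(h)$ for $h \in H$ (since $z_C = g_C^*$ in the notation of the proof of Theorem~\ref{T2}) together with the Kronecker relation $g_C(b_{w_{C'}}^{\vee}) = \delta_{C,C'}$ established there, one gets $\lambda_{C'} = \tau\bigl(z\,b_{w_{C'}}^{\vee}\bigr)$, which lies in $R$ because $z$ and $b_{w_{C'}}^{\vee}$ both lie in $H$ and $\tau(H\cdot H) \subseteq R$. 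Hence $\{z_C : C \in \Cl(W)\}$ is an $R$-basis of $Z(H)$. This last argument is uniform across the eight groups, so the only genuinely case-specific and non-routine work is the experimental search, in the second block above, for a basis $\mathcal{B}$ and representatives $w_C$ yielding polynomial $g_{w,C}$.
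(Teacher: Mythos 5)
There is a fundamental mismatch between what you prove and what the statement asserts. The statement is the paper's closing \emph{conjecture}, which claims the existence of a suitable basis and suitable class representatives for \emph{every} complex reflection group $W$. The paper does not prove this statement; it is left open (with a note that Hu and Shi later settled the case $G(d,1,n)$). Your proposal is, in substance, a proof of the paper's \emph{main theorem} --- the eight exceptional rank-$2$ groups $G_4,\dots,G_8,G_{12},G_{13},G_{22}$ --- obtained by exactly the route the paper takes: coset-table basis, Gram matrix and Proposition~\ref{tracef}, dual basis via $A^{-1}$, character evaluation with the Malle--Michel models, and an experimental search for representatives $w_C$ making the $g_{w,C}$ land in $R$. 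For the general statement you offer nothing: no strategy for the infinite family $G(de,e,n)$ at arbitrary rank, nor for the remaining exceptional groups, where a finite case-by-case computation is not available and where, worse, the construction of Theorem~\ref{T2} cannot even be set up unconditionally because the existence of the symmetrising trace $\tau$ (Conjecture~\ref{BMM sym}) is itself open. An ``experimentation and experience'' step cannot be quantified over infinitely many groups, so the core existential claim of the conjecture remains entirely unaddressed.

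Within the eight-group case your argument is sound, and one detail deserves credit: your final paragraph upgrading from an $F$-basis of $Z(FH)$ to an $R$-basis of $Z(H)$ is more careful than the paper, which simply asserts that $g_{w,C}\in R$ ``means that we obtain a basis of the center $Z(H)$.'' Your computation $\lambda_{C'}=\tau\bigl(z\,b_{w_{C'}}^{\vee}\bigr)\in R$, using $\tau(z_C\,b_{w_{C'}}^{\vee})=g_C(b_{w_{C'}}^{\vee})=\delta_{C,C'}$ and $\tau(H)\subseteq R$, correctly establishes that the $z_C$ span $Z(H)$ over $R$ and not merely $Z(FH)$ over $F$. But this strengthens the special case only; it does not close the gap above.
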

Note added in proof: We mention here that, after this work was completed, Hu and Shi \cite{hs} proved the aforementioned conjecture for the groups of type $G(d, 1, n)$. 

%\bibliographystyle{amsplain}
%\bibliography{paper}

\end{document}